\newtheorem{theorem}{Theorem}[section]
\newtheorem{definition}[theorem]{Definition}
\newtheorem{proposition}[theorem]{Proposition}
\newtheorem{corollary}[theorem]{Corollary}
\newtheorem{lemma}[theorem]{Lemma}
\newtheorem{remark}[theorem]{Remark}
\newtheorem*{theorem*}{Theorem}
\newtheorem*{thma}{Theorem A}
\newtheorem*{thmb}{Theorem B}
\DeclareMathOperator{\im}{im}
\DeclareMathOperator{\id}{id}
\DeclareMathOperator{\cost}{cost}
\DeclareMathOperator{\coim}{coim}
\title{A Closed Formula for the Interleaving Distance of Rectangle Persistence Modules}
\author[1]{Mehmet Ali Batan}
\author[2]{Claudia Landi}
\author[3]{Mehmetcik Pamuk}
\affil[1]{Department of Mathematics, Middle East Technical University, Ankara, Turkey, \authorcr \texttt{alibatan@metu.edu.tr}}
\affil[2]{Department of Sciences and Methods for Engineering, University of Modena and Reggio Emilia, Italy, \authorcr \texttt{claudia.landi@unimore.it}}
\affil[3]{Department of Mathematics, Middle East Technical University, Ankara, Turkey, \authorcr \texttt{mpamuk@metu.edu.tr}}
\date{\today}
\begin{document}

\maketitle

\begin{abstract}
We give formulas for calculating the interleaving distance between rectangle persistence modules that depend solely on the geometry of the underlying rectangles. Moreover, we extend our results to calculate the bottleneck distance for rectangle decomposable persistence modules.
\end{abstract}

\section{Introduction}

Topological data analysis is a recently emerging and fast-growing field for analyzing complex data using geometry and topology. Persistent homology is a powerful tool in topological data analysis for investigating data structure. Persistent homology studies topological features of a space that persist for some range of parameter values.  It has found a wide range of applications to biology \cite{topaz-ziegelmeier-halverson}, information networks \cite{aktas-akbas-fatmaoui}, finance \cite{ismail-noorani-ismail-razak-alias} etc., as well as to other parts of mathematics.

Let $\mathbb{k}$ be a fixed field and $P$ be a poset.  A ($P$-indexed) persistence module $\mathcal{M}$ is a functor $\mathcal{M}\colon P \to \textrm{\textbf{Vec}}$ where \textbf{Vec} denotes the category of $\mathbb{k}$-vector spaces.  Throughout the paper, we consider the poset of real numbers $\mathbb{R}$ with its standard total order and the product poset $\mathbb{R}^n$.  Modules of the form $\mathbb{R}^n \to \textrm{\textbf{Vec}}$ are known as $n$-parameter persistence modules.

In the $1$-parameter setting ($n=1$), persistence modules decompose in an essentially unique way into simple summands called interval modules. The decomposition is specified by a discrete invariant called a barcode.  In contrast, the algebraic structure of a multiparameter ($n\geq2$) persistence module can be far more complex.  While decomposition theorems exist, the underlying quivers are of wild-representation type, meaning that their sets of indecomposables are hard to classify.  Because of this reason, it is natural to look for a class of persistence modules where we can hope for results similar to the $1$-parameter case. 
Another useful property of $1$-parameter persistence modules is that they are completely described up to isomorphism by the rank invariant \cite{carlsson-zomorodian}. Botnan et al. \cite{botnan-lebovici-oudot}  prove that the rank invariant is complete on the class of rectangle-decomposable $2$-parameter persistence modules and provide algorithms to determine efficiently whether a given $2$-parameter persistence module belongs to this class.

Distances are tools for quantifying the (dis)similarity between persistence modules.  They play an essential role in both theory and application.  Different types of distances have been proposed on various types of persistence modules with values in 
\textbf{Vec}.  Persistence modules are usually compared using the interleaving distance due to their universality property over prime fields \cite{Lesnick}.   The interleaving distance for $1$-parameter persistence modules was introduced by Chazal et al. \cite{chazal et al}.  On the other hand, the bottleneck distance is the standard metric on barcodes; that is, it depends on the barcode decomposition of the given modules.
For the $1$-parameter persistence modules, it is now well-known that these two distances are equal to each other \cite{Lesnick}. Hence, computation of the interleaving distance can be done by computing the bottleneck distance with efficient algorithms.
Since a canonical definition of a barcode is unavailable for multiparameter persistence modules, the definition of bottleneck distance does not admit an immediate extension to the multidimensional setting. Nevertheless, it is still possible to define meaningful generalizations of the bottleneck distance between multiparameter persistence modules whose decompositions are given (see, for example, \cite[Definition~2.9]{Bjerkevik} for the interval decomposable modules).  Unfortunately, the interleaving distance and the bottleneck distance are no longer equal.  The bottleneck distance is an upper bound for the interleaving distance (see \cite{Bjerkevik} for details).

It is known that the interleaving distance is NP-hard to compute for persistence modules valued in \textbf{Vec} \cite{bjerkevik-botnan}. So, it is natural to consider a subclass of persistence modules to obtain a formula for the interleaving distance. For a polynomial time algorithm that computes the bottleneck distance for $2$-parameter interval decomposable modules, see \cite{Dey}. Because of the results mentioned above, rectangle decomposable persistence modules are another interesting class.

In this paper, we give a closed formula to calculate the interleaving distance between rectangle persistence modules that depend solely on the geometry of the underlying rectangles.
Let  $\mathcal{M}$ and $\mathcal{N}$ be two rectangle persistence modules with underlying rectangles $R_\mathcal{M}=(a_1, b_1) \times (a_2, b_2)$ and $R_\mathcal{N}=(c_1, d_1) \times (c_2, d_2)$ respectively. The first main result of the paper is as follows.

\begin{thma}
Let $\mathcal{M}$ and $\mathcal{N}$ be two rectangle persistence modules.  We have
\small \[
d_I(\mathcal{M}, \mathcal{N}) = \min \Bigg\{ \max \Big\{ \min_{i=1, 2} \frac{b_i-a_i}{2}, \min_{i=1, 2} \frac{d_i-c_i}{2} \Big\}, \max \Big\{ \| c-a \|_{\infty}, \| d-b \|_{\infty} \Big\}    \Bigg\}.
\]      
\end{thma}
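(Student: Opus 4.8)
The plan is to prove two inequalities: $d_I(\mathcal{M},\mathcal{N})$ is at most the right-hand side, and $d_I(\mathcal{M},\mathcal{N})$ is at least the right-hand side. For the upper bound, I would split into the two terms of the outer minimum and produce explicit $\epsilon$-interleavings. First, if $\epsilon \geq \max\{\|c-a\|_\infty, \|d-b\|_\infty\}$, then the shifted rectangles $R_\mathcal{M}$ and $R_\mathcal{N}$ are close enough that both $R_\mathcal{M} \subseteq R_\mathcal{N}^{+\epsilon}$-type containments hold (where ${}^{+\epsilon}$ denotes the $\epsilon$-shift by $(\epsilon,\epsilon)$), and one checks directly that the identity-like maps $\mathcal{M}\to\mathcal{N}(\epsilon)$ and $\mathcal{N}\to\mathcal{M}(\epsilon)$ built from the structure maps of the two rectangle modules commute appropriately; this is essentially the standard fact that two interval modules are $\epsilon$-interleaved when their supports are within $\epsilon$ in the relevant sense. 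Second, if $\epsilon \geq \max\{\min_i \frac{b_i-a_i}{2}, \min_i \frac{d_i-c_i}{2}\}$, then each rectangle module is itself $\epsilon$-interleaved with the zero module (because shifting a rectangle of "thin" side-length $\leq 2\epsilon$ by $(\epsilon,\epsilon)$ lands it outside itself in that coordinate, so the composite $\mathcal{M}\to\mathcal{M}(2\epsilon)$ is zero), hence $d_I(\mathcal{M},0)\le\epsilon$ and $d_I(\mathcal{N},0)\le\epsilon$, and the triangle inequality gives $d_I(\mathcal{M},\mathcal{N})\le 2\epsilon$ — wait, that loses a factor, so instead I would directly build the $\epsilon$-interleaving between $\mathcal{M}$ and $\mathcal{N}$ using zero maps in both directions and verify the interleaving squares collapse because both $\mathcal{M}\to\mathcal{M}(2\epsilon)$ and $\mathcal{N}\to\mathcal{N}(2\epsilon)$ vanish. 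Taking the minimal such $\epsilon$ in each case and then the min over the two cases yields the $\leq$ direction.

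For the lower bound, I need to show no $\epsilon$-interleaving exists when $\epsilon$ is strictly below the right-hand side. I would assume an $\epsilon$-interleaving pair $(\phi,\psi)$ with $\epsilon < \min\{\ldots\}$, which means $\epsilon$ is strictly less than \emph{both} $\max\{\min_i\frac{b_i-a_i}{2},\min_i\frac{d_i-c_i}{2}\}$ and $\max\{\|c-a\|_\infty,\|d-b\|_\infty\}$. From $\epsilon < \max\{\min_i\frac{b_i-a_i}{2},\min_i\frac{d_i-c_i}{2}\}$, at least one of the two modules — say $\mathcal{M}$ — has $\min_i\frac{b_i-a_i}{2} > \epsilon$, so $\mathcal{M}$ is "$2\epsilon$-wide in both coordinates" and the internal map $\mathcal{M}_x \to \mathcal{M}_{x+2\epsilon}$ is an isomorphism for a suitable $x$ (the bottom corner region); thus $\mathcal{M}$ is not $\epsilon$-interleaved with $0$, and more to the point the interleaving forces $\phi$ and $\psi$ to be nonzero on an appropriate region. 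The core of the argument is then a diagram chase on the commuting interleaving squares: nonzero maps $\mathcal{M}_x \xrightarrow{\phi_x} \mathcal{N}_{x+\epsilon} \xrightarrow{\psi_{x+\epsilon}} \mathcal{M}_{x+2\epsilon}$ composing to the (nonzero) internal map force $\mathcal{N}_{x+\epsilon}\neq 0$, i.e. $x+\epsilon \in R_\mathcal{N}$; running this at the extreme corners of $R_\mathcal{M}$ (at $(a_1,a_2)+\delta$ and $(b_1,b_2)-\delta$ for small $\delta$) forces $R_\mathcal{N}$ to contain points forcing $\|c-a\|_\infty \le \epsilon$ and $\|d-b\|_\infty \le \epsilon$, contradicting $\epsilon < \max\{\|c-a\|_\infty,\|d-b\|_\infty\}$.

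The main obstacle will be the lower-bound diagram chase, specifically the bookkeeping of \emph{which} corner inequalities get forced and making sure the argument is symmetric in the roles of $\mathcal{M}$ and $\mathcal{N}$ (the outer min is symmetric, but the naive chase is not). I expect to need a careful case analysis: when $\mathcal{M}$ is the "wide" module one gets constraints on how far $R_\mathcal{N}$ can be displaced, and vice versa, and one must check these always combine to bound $\max\{\|c-a\|_\infty,\|d-b\|_\infty\}$ rather than something weaker. It will also be important to handle the boundary behaviour of open rectangles carefully (working with points slightly interior to $R_\mathcal{M}$ and passing to a limit), and to confirm that the rank-one structure of rectangle modules makes every relevant structure map either an isomorphism or zero, which is what powers the whole chase. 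Once the wide module's internal isomorphisms are in hand, the contradiction should drop out of composing the two interleaving triangles at the four corners.
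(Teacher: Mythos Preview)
Your upper-bound argument is essentially the paper's (Lemmas~\ref{lem:interleaving-trival-mrph} and~\ref{lem:interleaving-nontrival-mrph}): zero maps give an interleaving at the half-width value, identity-on-overlap maps at the corner-distance value.

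The lower bound has a real gap. Your chase only records that $\mathcal{N}_{x+\vec{\epsilon}}\neq 0$ for $x$ in the inner region $\{x:a\prec x\prec b-2\vec{\epsilon}\}$ of $R_\mathcal{M}$; letting $x\to a^+$ and $x\to (b-2\vec{\epsilon})^-$ yields only the one-sided bounds $c_i-a_i\le\epsilon$ and $b_i-d_i\le\epsilon$, not $a_i-c_i\le\epsilon$ or $d_i-b_i\le\epsilon$, so you cannot conclude $\|c-a\|_\infty\le\epsilon$ or $\|d-b\|_\infty\le\epsilon$ from this alone. The symmetric chase through $R_\mathcal{N}$ would supply the missing halves, but only when $\mathcal{N}$ is also $2\epsilon$-wide---precisely the asymmetric case you flag as the main obstacle is where it goes vacuous.

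What is missing is the paper's Lemma~\ref{non-trivialmorphism} (Corollary~\ref{cor-nontrivialmorphism}): the existence of a nonzero morphism $\mathcal{M}\to\mathcal{N}(\vec{\epsilon})$ forces not just $c-\vec{\epsilon}\preceq a$ but also $d-\vec{\epsilon}\preceq b$, and the latter is proved by pushing the naturality square out to a point $w\notin R_\mathcal{M}$ with $w+\vec{\epsilon}\in R_\mathcal{N}$, where $\mathcal{M}_w=0$ and $\varphi_\mathcal{N}(u+\vec{\epsilon},w+\vec{\epsilon})=\id_\mathbb{k}$ together kill $\phi_u$. This uses the morphism structure, not mere support. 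With that lemma in hand the paper's case split is also cleaner than yours: for any $\epsilon$-interleaving $(f,g)$, either one of $f,g$ is zero---then \emph{both} triangles collapse and \emph{both} modules are $2\epsilon$-trivial (Lemma~\ref{forgtrivialmorphism})---or both are nonzero, and applying the morphism classification to $f$ and to $g$ separately gives all four corner inequalities at once (Lemma~\ref{fandgaretrivialmorphisms}).
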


Theorem A can be easily extended to the case when $n > 2$ (see Corollary~\ref{interleavingfor$n$-parameter}).  We also want to note that by   Proposition~\ref{closureequality} and Corollary~\ref{interiorequality}, our result holds regardless of whether the underlying rectangles are open, closed, or neither.

Furthermore, thanks to the fact that if $\mathcal{M}$ and $\mathcal{N}$ are two interval persistence modules, then the interleaving distance is equal to the bottleneck distance (see Corollary~\ref{indecomposableequality}), the formula above can be used to calculate the bottleneck distance between two rectangle persistence modules in essentially the same way as for the bottleneck distance in the $1$-parameter case (see \cite[p. 50]{Oudot}). Since, for rectangle decomposable persistence modules, the bottleneck distance differs from the interleaving distance, we outline below an alternative formula for the bottleneck distance in that case.

Let $\mathcal{M}$ and $\mathcal{N}$ be two rectangle decomposable persistence modules with decompositions 
$$
\displaystyle \mathcal{M}=\bigoplus_{i=1}^{m}\mathcal{M}_i ~\textrm{and}~ \displaystyle \mathcal{N}=\bigoplus_{j=1}^{k}\mathcal{N}_j.$$  
Let us denote the underlying rectangle of the persistence module $\mathcal{M}_i$ by $R_i=(a_1^i, b_1^i)\times(a_2^i, b_2^i)$
and $\mathcal{N}_j$ by $Q_j=(c_1^j, d_1^j)\times(c_2^j, d_2^j)$
where $a^i=(a_1^i, a_2^i)$, $b^i=(b_1^i, b_2^i)$, $c^j=(c_1^j, c_2^j)$ and $d^j=(d_1^j, d_2^j)$. The second main result is as follows.

\begin{thmb}
Let $\mathcal{M}$ and $\mathcal{N}$ be two rectangle decomposable persistence modules given as above. Let $S$ be the set of all partial multibijections between the barcodes $B(\mathcal{M})$ and $B(\mathcal{N})$. Let $\sigma \in S$ and let $\mathbf{I}'(\sigma)=B(\mathcal{M})- \coim \sigma$, $\mathbf{J}'(\sigma)=B(\mathcal{N})- \im \sigma$. Then, the bottleneck distance is equal to {\footnotesize
\[
 \displaystyle \min_{\sigma \in S} \max \left\{  \max_{R_i \in \coim \sigma} \Big\{  d_I\Big(\mathcal{M}_i, \sigma(\mathcal{M}_i)\Big) \Big\}, \max_{R_{i'} \in \mathbf{I}'(\sigma)} \Big\{ \min_{s=1,2} \frac{b_s^{i'}-a_s^{i'}}{2} \Big\}, \max_{Q_{j'} \in \mathbf{J}'(\sigma)} \Big\{ \min_{s=1,2} \frac{d_s^{j'}-c_s^{j'}}{2} \Big\} \right\}
\]}
where the interleaving distance between $\mathcal{M}_i$ and $\sigma(\mathcal{M}_i)$ is equal to {\footnotesize
\[
\min \Bigg\{ \max \Big\{ \min_{s=1,2} \frac{b_s^i-a_s^i}{2}, \min_{s=1,2} \frac{d_s^{\sigma(i)}-c_s^{\sigma(i)}}{2} \Big\}, \max \Big\{ \| c^{\sigma(i)}-a^i \|_{\infty}, \| d^{\sigma(i)}-b^i \|_{\infty} \Big\}  \Bigg\}.
\]}
\end{thmb}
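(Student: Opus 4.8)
The plan is to unwind the definition of the bottleneck distance for rectangle decomposable modules and reduce everything to the two quantities we already control: the interleaving distance $d_I(\mathcal M_i,\mathcal N_j)$ between a pair of rectangle modules, which is given by Theorem~A, and the interleaving distance $d_I(\mathcal M_i,0)$ between a single rectangle module and the zero module. Recall that $d_B(\mathcal M,\mathcal N)$ is the infimum of those $\delta\ge 0$ for which there is a $\delta$-matching, i.e.\ a partial multibijection $\sigma\in S$ between $B(\mathcal M)$ and $B(\mathcal N)$ such that every matched pair $\mathcal M_i,\sigma(\mathcal M_i)$ is $\delta$-interleaved and every summand left out by $\sigma$ --- that is, every $R_{i'}\in\mathbf I'(\sigma)$ and every $Q_{j'}\in\mathbf J'(\sigma)$ --- is $\delta$-trivial, i.e.\ $\delta$-interleaved with $0$.

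First I would record the triviality radius of a rectangle module. A module $\mathcal L$ is $\delta$-interleaved with $0$ exactly when the internal morphism $\mathcal L(x)\to\mathcal L(x+(2\delta,2\delta))$ vanishes for every $x\in\mathbb R^2$, since a $\delta$-interleaving with $0$ consists of zero maps and the only remaining compatibility condition is that this structure map factor through $0$. For $\mathcal M_i$ supported on $R_i=(a_1^i,b_1^i)\times(a_2^i,b_2^i)$ this morphism is an isomorphism when both $x$ and $x+(2\delta,2\delta)$ lie in $R_i$ and is zero otherwise, and such an $x$ exists iff $2\delta<b_s^i-a_s^i$ for both $s$. Hence $\mathcal M_i$ is $\delta$-trivial iff $\delta\ge\min_{s=1,2}\frac{b_s^i-a_s^i}{2}$, so $d_I(\mathcal M_i,0)=\min_{s=1,2}\frac{b_s^i-a_s^i}{2}$; the same computation gives $d_I(Q_{j'},0)=\min_{s=1,2}\frac{d_s^{j'}-c_s^{j'}}{2}$. (This is the $\mathcal N=0$ degeneration of the first term in the formula of Theorem~A, and may be cited from there if it already appears.)

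Next I would fix a partial multibijection $\sigma\in S$ and identify the smallest $\delta$ for which $\sigma$ is a $\delta$-matching. For any two modules the set of $\delta$ at which they are $\delta$-interleaved is upward closed (compose an interleaving with structure maps) with infimum $d_I$, so $\delta$-interleaving is equivalent to $\delta>d_I$ up to the single boundary value $\delta=d_I$; the same holds for $\delta$-triviality by the previous step. Intersecting these finitely many upward-closed conditions, the set of $\delta$ making $\sigma$ a $\delta$-matching has infimum $\max\{\max_{R_i\in\coim\sigma}d_I(\mathcal M_i,\sigma(\mathcal M_i)),\ \max_{R_{i'}\in\mathbf I'(\sigma)}d_I(\mathcal M_{i'},0),\ \max_{Q_{j'}\in\mathbf J'(\sigma)}d_I(Q_{j'},0)\}$. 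Since $S$ is finite, $d_B(\mathcal M,\mathcal N)$ is the minimum over $\sigma\in S$ of this quantity; substituting the closed formula of Theorem~A for each matched term and the triviality radii computed above for the unmatched terms yields exactly the displayed expression.

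\textbf{Main obstacle.} The delicate point is the boundary case $\delta=d_I$: ``$\delta$-interleaved'' is strictly stronger than ``$\delta\ge d_I(\,\cdot\,,\,\cdot\,)$'' in general, because the interleaving distance is defined as an infimum that need not be attained --- and this is precisely where the open/closed nature of the rectangles intervenes. One must therefore argue carefully that passing first to the infimum over $\delta$ and then to the minimum over the finite set $S$ reproduces exactly the stated formula (and, if one wants the optimal matching realized, that this minimum is attained). The only remaining bookkeeping concerns multiplicities: when a rectangle occurs several times among the $\mathcal M_i$ or the $\mathcal N_j$ one must work with partial multibijections rather than ordinary partial matchings, but this does not alter the argument since both the cost of a matched pair and the cost of an unmatched summand depend only on the underlying rectangles.
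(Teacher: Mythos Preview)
Your proposal is correct and follows essentially the same route as the paper: the paper packages your second and third steps into a separately-stated Proposition~\ref{bot.dis.2} (for general finitely presented interval decomposable modules), then invokes Propositions~\ref{epsilontrivialrectangle} and~\ref{2epsilontrivialiffepsiloninterleaved} for the triviality radii and Theorem~A for the matched pairs, exactly as you outline. The boundary issue you flag as the main obstacle is handled in the paper by restricting Proposition~\ref{bot.dis.2} to finitely presented modules, so that the closure theorem (Theorem~\ref{closure}) guarantees $d_I\le\epsilon$ implies $\epsilon$-interleaved; you would need to invoke the same fact to close your argument.
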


\vskip 0.2cm
\noindent{\bf Acknowledgements.} 
 This research was supported by the T\"{U}B\.{I}TAK Scholarships 2214-A - International Research Fellowship Programme for PhD Students and 220N359.  The results in this paper are part of the first author’s PhD thesis \cite{Batan}. The first author would like to thank Vanessa Robins and the Applied Topology and Geometry Group members at Australian National University for their hospitality during the last year of his PhD studies and their valuable comments. 

\section{Preliminaries}

\subsection{Notations and Conventions}\label{NotionsandConventions}
For $n \geq 1$, let us start with defining a partial order on $ \mathbb{R}^{n}$ by taking 
$$u=(u_1, \dots, u_n) \preceq  v = (v_1, \ldots, v_n)$$ if and only if $ u_i \leq v_i $ for all $i=1, 2, \ldots, n$, and $u \prec  v$ if and only $ u_i < v_i $ for all $i=1, 2, \ldots, n$.  Note that $u \succ v$ is not the negation of $u \preceq v$.
Also, let us endow $\mathbb{R}^{n}$ with the max-norm, that is  
$$
\displaystyle \| u \|_{\infty} \doteq  \max_{1 \leq i \leq n}  \left\{ |u_i| \right\} 
$$ 
\noindent
for all $u\in \mathbb{R}^{n}$. The metric induced by the max-norm is 
$$
d_\infty(u,v) \doteq \max \left\{|u_1-v_1|, \ldots, |u_n-v_n| \right\}
$$ 
\noindent
for all $u, v \in \mathbb{R}^{n}$.
Let us define the \textbf{extended real line}, and more generally, the \textbf{extended space}, as $\overline{\mathbb{R}}\doteq \mathbb{R}\cup \{ -\infty, +\infty \}$ and $\displaystyle \overline{\mathbb{R}}^n\doteq \prod_{i=1}^n \overline{\mathbb{R}}$, respectively.

Throughout the paper, for any $a\in \mathbb{R}$, we follow the convention that

\begin{equation}\label{conventionslist1}
\begin{split}
 a+(\pm \infty)=(\pm \infty)+a=\pm \infty, \\
   -\infty < a < +\infty,
\end{split}
\end{equation}
and, for any $b \in \overline{\mathbb{R}}$,
\begin{equation}\label{conventionslist2}
\begin{split}
& -\infty \leq b \leq +\infty \,.
\end{split}
\end{equation}
Moreover, we follow the convention that
\begin{equation}\label{conventionslist3}
\begin{split}
& (\pm \infty)-(\pm \infty)=0, \\
& (\pm \infty)+(\pm \infty)=\pm \infty, \\
& |\pm \infty|=+\infty. 
\end{split}
\end{equation}

\subsection{Persistence Modules}

\begin{definition}\label{persistencemodule}
An $n$-parameter \textbf{persistence module} $\mathcal{M}$ over a field $\mathbb{k}$ is a family of vector spaces $\displaystyle \{\mathcal{M}_u \}_{u \in \mathbb{R}^{n}}$ over $\mathbb{k}$ together with a family of linear maps called \textbf{transition maps}.
$$
\{ \varphi_\mathcal{M}(u, v)\colon \mathcal{M}_u \rightarrow \mathcal{M}_v, \ u\preceq v \in \mathbb{R}^{n}\}
$$ 
such that  for every $u\preceq v\preceq w \in \mathbb{R}^{n}$, we have 
\begin{itemize}
    \item [(i)]  $\varphi_\mathcal{M}(v, w)\circ \varphi_\mathcal{M}(u, v)=\varphi_\mathcal{M}(u, w)$,
    \item [(ii)] $\varphi_\mathcal{M}(u, u)=\id_{\mathcal{M}_u}$.
\end{itemize}
\end{definition}

We say that the $n$-parameter persistence module  $\mathcal{M}$ is the \textbf{zero persistence module} if $\mathcal{M}_u$ is the zero vector space for all $u\in \mathbb{R}^n$.

\begin{definition} \label{morphismbetweenpersistencemodules}
A \textbf{morphism} $f \colon \mathcal{M} \rightarrow \mathcal{N}$ between two $n$-parameter persistence modules $ \mathcal{M}$ and $\mathcal{N}$ is a collection of linear maps $\{f_u \colon \mathcal{M}_u \rightarrow \mathcal{N}_u \}$ such that the following diagram commutes for all $u \preceq v \in \mathbb{R}^n$.

\[ 
\begin{tikzcd}
\mathcal{M}_u \arrow{r}{\varphi_\mathcal{M}(u, v)} \arrow[swap]{d}{f_u} & \mathcal{M}_v \arrow{d}{f_v} \\
\mathcal{N}_u \arrow{r}{\varphi_\mathcal{N}(u, v)}& \mathcal{N}_v
\end{tikzcd}
\]
\end{definition}

\begin{definition} \label{isomorphismofpersistencemodules}
We say that the persistence modules $ \mathcal{M}$ and $\mathcal{N}$ are isomorphic, denoted by $\mathcal{M} \simeq \mathcal{N}$, if there exist two morphisms $f \colon \mathcal{M} \rightarrow \mathcal{N}$ and $g \colon \mathcal{N}  \rightarrow \mathcal{M}$ such that $f\circ g$ and $g\circ f$ are identity maps.
\end{definition}

\begin{definition} \label{shift}
Let $\mathcal{M}$ be an $n$-parameter persistence module. Let $\Vec{\epsilon}=(\epsilon, \epsilon, \ldots, \epsilon) \in \mathbb{R}^n$. For any $\Vec{\epsilon}\in \mathbb{R}^n$ with $\epsilon\geq 0$, the \textbf{$\epsilon$-shift} of the persistence module $\mathcal{M}$ is the $n$-parameter persistence module $\mathcal{M}(\Vec{\epsilon})$ defined as

\begin{itemize}
    \item $\mathcal{M}(\Vec{\epsilon})_u\doteq \mathcal{M}_{u+\Vec{\epsilon}} \ $ for any $u\in \mathbb{R}^n$, and
  
    \item $\varphi_{\mathcal{M}(\Vec{\epsilon})}(u, v)\doteq \varphi_\mathcal{M}(u+\Vec{\epsilon}, v+\Vec{\epsilon}) \ $ for any $u\preceq v \in \mathbb{R}^{n}$.
\end{itemize}

\end{definition}

\begin{definition}
Let $\mathcal{M}$ be a persistence module, we call $\displaystyle \mathcal{M}=\mathcal{M}_1 \oplus \mathcal{M}_2$ a \textbf{direct sum} of the persistence modules $\mathcal{M}_1$ and $\mathcal{M}_2$ where 
\begin{itemize}
    \item $\mathcal{M}_u \simeq ({\mathcal{M}_1})_u \oplus ({\mathcal{M}_2})_u \ $ for any  $u\in \mathbb{R}^{n}$, and
    \item $\varphi_{\mathcal{M}}(u, v)\doteq \left( {\begin{array}{cc}
   \varphi_{\mathcal{M}_1}(u, v) & 0 \\
   0 & \varphi_{\mathcal{M}_2}(u, v) 
  \end{array}} \right)$ for any $u\preceq v \in \mathbb{R}^{n}$.
\end{itemize}
Analogously, we call $\displaystyle \mathcal{M}=\bigoplus \mathcal{M}_i$ a \textbf{direct sum} of the persistence modules $\mathcal{M}_i$ for some collection of persistence modules $\{\mathcal{M}_i\}$ satisfying the same structure.

We say that $\mathcal{M}$ is an \textbf{indecomposable persistence module} if $\mathcal{M}$ is a non-zero persistence module and cannot be written as a direct sum of two non-zero persistence modules. By the Krull–Schmidt theorem \cite{Atiyah}, there exists an essentially unique (up to permutation and isomorphism) decomposition $\displaystyle \mathcal{M}=\bigoplus \mathcal{M}_i$ with every $\mathcal{M}_i$ being indecomposable.
\end{definition}

\begin{definition} [\cite{Bjerkevik}] \label{interval}
We say that $I \subseteq \mathbb{R}^{n} $ is an $n$-parameter connected \textbf{interval} if 
	 \begin{itemize}
	 \item $I$ is non-empty,
	 \item If $u ,v \in I$ and $u \preceq w \preceq v$, then $w \in I$,
	 \item  If $u , v \in I$, then  for some $m\in \mathbb{N}$ there exist $u_1, u_2, \ldots, u_{2m} \in I$ such that $u \preceq u_1 \succeq u_2 \preceq \ldots \succeq  u_{2m} \preceq v  $.
	 \end{itemize}
\end{definition}

Throughout the paper, we denote the topological closure of an interval $I$ as a subset of $\mathbb{R}^{n}$ by $\Bar{I}$, and the interior of an interval $I$ as $I^\mathrm{o}$.

\begin{definition}[\cite{Bjerkevik}] \label{intervalpersistencemodule}
We say that a persistence module $\mathcal{M}$ is an \textbf{interval persistence module} if 
 \begin{itemize}
	 \item for some interval $I\subseteq \mathbb{R}^{n} $, $\mathcal{M}_u = \mathbb{k}$ for every $u \in I$, and $\mathcal{M}_u = 0$ for every $u \notin I$.
	 \item $\varphi_\mathcal{M}(u, v) = \id_\mathbb{k}$ for every $u \preceq v \in I$.
 \end{itemize}
Here, $I$ is called the underlying interval of the persistence module $\mathcal{M}$.
\end{definition}

We may denote an interval persistence module $\mathcal{I}$ with underlying interval $I$ by $\mathcal{I}^{I}$.

\begin{definition}[\cite{Bjerkevik}]
If $ \displaystyle \mathcal{M} \simeq \bigoplus_{I \in B} \mathcal{I}^{I}$ for a multiset $B$ of intervals of $\mathbb{R}^{n}$, we say that $\mathcal{M}$ is an \textbf{interval decomposable persistence module} and the multiset $ B(\mathcal{M}) \doteq B$ is the \textbf{barcode} of $\mathcal{M}$. 
\end{definition}
 
If $ \displaystyle \mathcal{M} \simeq \bigoplus_{I \in B(\mathcal{M})} \mathcal{I}^{I}$ is an interval decomposable persistence module, then $ \overline{\mathcal{M}}$ denotes the \textbf{closed interval decomposable persistence module} $ \displaystyle \bigoplus_{I \in B(\mathcal{M})} \mathcal{I}^{\Bar{I}}$. Similarly, $\mathcal{M}^\mathrm{o}$ denotes the \textbf{open interval decomposable persistence module} $ \displaystyle \bigoplus_{I \in B(\mathcal{M})} \mathcal{I}^{{I}^\mathrm{o}}$.

\begin{remark}
Any interval persistence module $\mathcal{M}$ is indecomposable \cite[Proposition~2.2]{Botnan}. 
\end{remark}

Now, let us define rectangles, rectangle persistence modules, and rectangle decomposable persistence modules.

\begin{definition} \label{rectanglepersistencemodule}
We say that $R \subseteq \mathbb{R}^n$ is an $n$-parameter \textbf{rectangle} if $R=I_1 \times I_2 \times\ldots \times I_n$ where each $I_i$ is a $1$-parameter interval. We say that $\mathcal{M}$ is an $n$-parameter \textbf{rectangle persistence module} if it is an interval persistence module whose $n$-parameter underlying interval is an $n$-parameter rectangle $R$. We will call $R$ as the underlying rectangle of the rectangle persistence module $\mathcal{M}$. We say that a persistence module is a \textbf{rectangle decomposable persistence module} if it decomposes into a direct sum of only rectangle persistence modules.
\end{definition}

\begin{definition} 
A \textbf{free interval} generated by $u \in \mathbb{R}^{n}$ is an interval of the form $\langle u \rangle \doteq \{ v \in \mathbb{R}^{n}: u \preceq v \} $ and a \textbf{free interval persistence module} or shortly  \textbf{free persistence module} is an interval persistence module, denoted by $\mathcal{F}^{\langle u \rangle}$ with $\langle u \rangle$ as its underlying interval. A \textbf{free decomposable persistence module} $\mathcal{F}$ is a persistence module whose indecomposable summands are all free persistence modules.  Note that every free persistence module is an interval persistence module, but the converse may not be correct.
\end{definition}

\begin{definition}
[\cite{Corbet}]
A persistence module $\mathcal{M}$ is said to be a \textbf{finitely generated} persistence module if and only if there exists an epimorphism $\displaystyle \phi: \bigoplus_{i=1}^m\mathcal{F}_i\to \mathcal{M}$ where $\mathcal{F}_1, \ldots, \mathcal{F}_m$ are free persistence modules.  Furthermore, a persistence module  $\mathcal{M}$ is called a \textbf{finitely presented} persistence module if it is finitely generated and $\ker \phi$ is also finitely generated. Equivalently, $\mathcal{M}$ is a finitely presented persistence module if and only if there exists an exact sequence 
$$
\displaystyle \bigoplus_{j=1}^n\mathcal{G}_j \to \bigoplus_{i=1}^m\mathcal{F}_i \xrightarrow{\phi}\mathcal{M} \to 0
$$
where $\mathcal{G}_1, \ldots, \mathcal{G}_n$ are also free persistence modules.
\end{definition}

\subsection{Interleaving Distance}

In this subsection, we first define the interleaving distance for multiparameter persistence modules and then give some of its properties.

\begin{definition}[\cite{Lesnick}] \label{interleaving}
Let $\epsilon$ be a non-negative real number and let $\vec{\epsilon} \doteq (\epsilon, \epsilon, \ldots, \epsilon) \in \mathbb{R}^{n}$.  
An \textbf{$\epsilon $-interleaving} between $n$-parameter persistence modules  $\mathcal{M}$ and $\mathcal{N}$ is a collection of morphisms 
$f_{u}\colon \mathcal{M}_u \to \mathcal{N}_{u+\vec{\epsilon}}$ and $g_{u}\colon \mathcal{N}_u \to \mathcal{M}_{u+\vec{\epsilon}}$  such that 
the four diagrams 
\vskip .5cm		

	\begin{center}
\begin{equation} \label{squarediagramsfirst}
 	        \begin{tikzcd}                       
			    \mathcal{M}_u \arrow[r] \arrow["f_u" , d]    
				& \mathcal{M}_v \arrow[d, "f_v"] \\
				\mathcal{N}_{u+ \vec{\epsilon}} \arrow[r]
				& \mathcal{N}_{v+ \vec{\epsilon}}
			\end{tikzcd}
			\begin{tikzcd}
				\mathcal{N}_u \arrow[r] \arrow["g_u" , d]
				& \mathcal{N}_v \arrow[d, "g_v"] \\
			  \mathcal{M}_{u+ \vec{\epsilon}} \arrow[r]
				& \mathcal{M}_{v+ \vec{\epsilon}}
			\end{tikzcd}
			\vspace*{7mm}
\end{equation}
\begin{equation} \label{trianglediagramsfirst}			
			\begin{tikzcd}[row sep=small]
				& \mathcal{N}_{u+ \vec{\epsilon}} \arrow[dd, "g_{u+ \vec{\epsilon}}"] \\
				\mathcal{M}_u \arrow[ur, "f_u"] \arrow[dr] & \\
				&\mathcal{M}_{u+ 2\vec{\epsilon}}
			\end{tikzcd}
			\begin{tikzcd}[row sep=small]
				&\mathcal{M}_{u+ \vec{\epsilon}}  \arrow[dd, "f_{u+ \vec{\epsilon}}"] \\
				\mathcal{N}_u \arrow[ur, "g_u"] \arrow[dr] & \\
				&\mathcal{N}_{u+ 2\vec{\epsilon}} 
			\end{tikzcd}
\end{equation}
\end{center}
commute for all $u \preceq v \in \mathbb{R}^{n}$.	
\end{definition}
\vskip .5cm
The non-labelled maps are the transition maps mentioned in Definition~\ref{persistencemodule}. \\
 
Note that, if $\mathcal{M}$ and $\mathcal{N}$ are  $\epsilon$-interleaved, then they are $\delta$-interleaved for any $\epsilon \leq \delta $. 
Therefore, one can define the \textbf{interleaving distance} by 
$$ 
d_I(\mathcal{M}, \mathcal{N}) \doteq \inf \{\epsilon \in [0, +\infty) : \mathcal{M} \ \textrm{and} \ \mathcal{N} \ \textrm{are} \ \epsilon \textrm{-interleaved} \}.
$$ 
If no such $\epsilon$ exists, then we put $d_I(\mathcal{M}, \mathcal{N}) \doteq + \infty$.

\begin{remark}
Note that if the persistence modules $\mathcal{M}$ and $\mathcal{N}$ are $0$-interleaved, then $d_I(\mathcal{M}, \mathcal{N})=0$. However, the following example shows that the converse is not always true (see  \cite[Example~2.3]{Lesnick}). 
\end{remark}

Thanks to the following theorem, the converse statement of the remark above is also true for finitely presented persistence modules.

\begin{theorem}[\cite{Lesnick}] \label{closure}
If  $\mathcal{M}$  and  $\mathcal{N}$ are finitely presented $n$-parameter persistence modules and $d_I(\mathcal{M}, \mathcal{N}) = \epsilon$, then $\mathcal{M}$ and $\mathcal{N}$ are $\epsilon$-interleaved. 
\end{theorem}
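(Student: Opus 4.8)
The plan is to prove that the set $S\doteq\{\delta\in[0,+\infty):\mathcal{M}\textrm{ and }\mathcal{N}\textrm{ are }\delta\textrm{-interleaved}\}$ is closed; since $S$ is upward closed — a $\delta$-interleaving composes with transition maps into a $\delta'$-interleaving for every $\delta'\geq\delta$, as noted after Definition~\ref{interleaving} — this yields $\inf S\in S$, i.e.\ that $\mathcal{M}$ and $\mathcal{N}$ are $d_I(\mathcal{M},\mathcal{N})$-interleaved. Throughout I would fix finite presentations of $\mathcal{M}$ and of $\mathcal{N}$.

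The first step is to encode, in finitely much linear-algebraic data, whether $\mathcal{M}$ and $\mathcal{N}$ are $\delta$-interleaved. A morphism $\mathcal{M}\to\mathcal{N}(\vec{\delta})$ is freely specified by the images of the finitely many generators of $\mathcal{M}$ in the spaces $\mathcal{N}(\vec{\delta})_{u}=\mathcal{N}_{u+\vec{\delta}}$ at the generator grades $u$, subject to the finitely many defining relations of $\mathcal{M}$ going to $0$; symmetrically for $\mathcal{N}\to\mathcal{M}(\vec{\delta})$. Encoding the unknown morphisms $f$ and $g$ this way, the commuting squares \eqref{squarediagramsfirst} and the relation-preservation constraints become \emph{linear} equations in the matrix entries, while the two composition triangles \eqref{trianglediagramsfirst} become \emph{bilinear} ones; here one must be careful that the triangle conditions, though checked only on generators, are captured faithfully — an obstruction visible only at a non-generator grade (as already in the $1$-parameter case, where a relation of a shifted module kills a composite) still translates, after unwinding naturality, into a constraint among the generator-level matrices, obtained by pushing the relevant spanning vectors along transition maps. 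The coefficients of the resulting finite system are assembled from the transition maps of $\mathcal{M}$, $\mathcal{N}$ and their $\vec{\delta}$- and $2\vec{\delta}$-shifts at the grades $u+\vec{\delta}$ and $u+2\vec{\delta}$, and these are combinatorially constant in $\delta$ away from finitely many \emph{critical values} $0\leq t_1<\cdots<t_N$, namely the $\delta$ at which one of these affine-in-$\delta$ grades crosses one of the finitely many staircase walls $\langle w\rangle$ determined by the generator and relation grades of $\mathcal{M}$ and $\mathcal{N}$.

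Consequently, on each interval $[t_\ell,t_{\ell+1})$ (and on $[t_N,+\infty)$) the entire finite system, hence its solvability over $\mathbb{k}$, hence the predicate that $\mathcal{M}$ and $\mathcal{N}$ are $\delta$-interleaved, is constant, so $S$ is a finite union of such intervals. Because the staircases $\langle w\rangle=\{v:w\preceq v\}$ are closed, a grade sitting on a wall at $\delta=t_\ell$ lies \emph{inside} the corresponding staircase there, so the combinatorial type at $t_\ell$ agrees with the type just above it; this makes the predicate constant on each interval \emph{including} its left endpoint, so each constituent interval of $S$ has the form $[t_\ell,t_{\ell+1})$ or $[t_N,+\infty)$. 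Being also upward closed, $S$ must equal $[t_{\ell_0},+\infty)$ for some $\ell_0$ (or $\emptyset$, or $[0,+\infty)$); in every case $\inf S\in S$.

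The main obstacle is the bookkeeping in the second step: verifying that the generator-level encoding of the triangle conditions is faithful, and producing a finite complete list of critical values, which requires tracking how the constraints propagate through the poset structure (the nontrivial critical values arise from affine comparisons between shifted generator and relation grades of the two modules, not merely from wall-crossings of a single module). A cleaner but field-sensitive alternative is to pick a sequence $\delta_k\downarrow\epsilon$ with $\delta_k$-interleavings and pass to a limit; over an arbitrary field $\mathbb{k}$ this forces one to transfer solvability to an algebraically (or real) closed field — legitimate, since consistency of the system is detected by determinantal conditions invariant under field extension — and then to run a compactness argument on a suitably normalized, bounded family of solutions.
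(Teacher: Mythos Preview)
The paper does not supply its own proof of this theorem; it is quoted from \cite{Lesnick} and used as a black box, so there is no argument in the present paper to compare yours against.

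For what it is worth, your sketch follows the standard route and is essentially correct. For finitely presented $\mathcal{M}$ and $\mathcal{N}$, a morphism $\mathcal{M}\to\mathcal{N}(\vec{\delta})$ is determined by the images of the generators of $\mathcal{M}$ subject to the relations mapping to zero, and the interleaving conditions then amount to the solvability over $\mathbb{k}$ of a finite system of linear and bilinear equations whose coefficients are built from transition maps at grades depending affinely on $\delta$. Those coefficients are piecewise constant, and because the staircases $\langle w\rangle$ are closed the combinatorial type at each critical value agrees with the type immediately to its right, so the predicate ``$\delta$-interleaved'' is constant on half-open intervals $[t_\ell,t_{\ell+1})$; together with upward-closedness this forces $S=[t_{\ell_0},\infty)$ and hence $\inf S\in S$. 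One small simplification: the square diagrams \eqref{squarediagramsfirst} are automatic once $f$ and $g$ are constructed as morphisms via the presentations, so only the relation-preservation and triangle constraints actually enter the system. Your caveat about the limit-of-interleavings alternative being field-sensitive is apt; the algebraic-system approach avoids that issue entirely.
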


Considering the case $\epsilon = 0$, it follows from the closure theorem that $d_I$ restricts to a metric on isomorphism classes of finitely presented multidimensional persistence modules.

\begin{definition}\label{epsilontrivial}
Let $\Vec{\epsilon}=(\epsilon, \epsilon, \ldots, \epsilon) \in \mathbb{R}^n$ with $\epsilon \geq 0$. An $n$-parameter persistence module $\mathcal{M}$ is called \textbf{$\epsilon$-significant} if $\varphi_{\mathcal{M}}(u, u + \Vec{\epsilon}) \neq 0$ for some $u \in \mathbb{R}^n$, and \textbf{$\epsilon$-trivial} otherwise.
\end{definition}

\begin{proposition}\label{epsilontrivialrectangle}
    Let $\mathcal{M}$ be an $n$-parameter rectangle persistence module with underlying rectangle $$R_\mathcal{M}=(a_1, b_1)\times (a_2, b_2)\times \ldots \times (a_n, b_n).$$ 
    Then, $\mathcal{M}$ is $\epsilon$-trivial for $\displaystyle \epsilon \geq \min_{1 \leq i \leq n} \{b_i-a_i\}$.
\end{proposition}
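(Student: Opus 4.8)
The plan is to show directly that for $\epsilon \geq \min_{1\leq i \leq n}\{b_i - a_i\}$, every transition map $\varphi_{\mathcal{M}}(u, u+\vec{\epsilon})$ vanishes. First I would recall the structure of a rectangle persistence module: $\mathcal{M}_u = \mathbb{k}$ precisely when $u \in R_\mathcal{M} = (a_1,b_1)\times\cdots\times(a_n,b_n)$, and $\mathcal{M}_u = 0$ otherwise, with all nontrivial transition maps equal to $\id_\mathbb{k}$. Consequently $\varphi_{\mathcal{M}}(u, u+\vec{\epsilon}) \neq 0$ can happen only if both $u \in R_\mathcal{M}$ and $u+\vec{\epsilon} \in R_\mathcal{M}$ (in which case it is the identity); so it suffices to prove that these two membership conditions cannot hold simultaneously when $\epsilon$ is at least the smallest edge length.

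Next I would make that incompatibility precise. Suppose, toward a contradiction, that $u \in R_\mathcal{M}$ and $u+\vec{\epsilon} \in R_\mathcal{M}$. Writing $u = (u_1,\ldots,u_n)$, membership in $R_\mathcal{M}$ gives $a_i < u_i < b_i$ and $a_i < u_i + \epsilon < b_i$ for every $i$. Let $i_0$ be an index realizing the minimum, i.e.\ $b_{i_0} - a_{i_0} = \min_{1\leq i \leq n}\{b_i - a_i\} \leq \epsilon$. From the $i_0$-th coordinate conditions we get $a_{i_0} < u_{i_0}$ and $u_{i_0} + \epsilon < b_{i_0}$, hence $\epsilon < b_{i_0} - u_{i_0} < b_{i_0} - a_{i_0}$, contradicting $b_{i_0} - a_{i_0} \leq \epsilon$. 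Therefore no such $u$ exists, every transition map $\varphi_{\mathcal{M}}(u, u+\vec{\epsilon})$ is zero, and $\mathcal{M}$ is $\epsilon$-trivial by Definition~\ref{epsilontrivial}.

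I would also handle the degenerate and conventional cases so the statement is literally correct: if some $a_i = b_i$ then $R_\mathcal{M}$ has an empty factor, so $R_\mathcal{M} = \emptyset$, $\mathcal{M}$ is the zero module, and it is $\epsilon$-trivial for every $\epsilon \geq 0$; the same argument covers the case where an interval $I_i$ is a single point. When $a_i = -\infty$ or $b_i = +\infty$ for some $i$, the conventions in (\ref{conventionslist1})--(\ref{conventionslist3}) make $b_i - a_i = +\infty$, so such indices never realize the minimum unless all of them do, in which case the hypothesis $\epsilon \geq +\infty$ forces $\epsilon = +\infty$ and the coordinate argument still applies verbatim using $-\infty < u_i < +\infty$. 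The only mild subtlety — and the one place to be careful rather than the real obstacle — is that the underlying rectangle need not be open: the intervals $I_i$ may include one or both endpoints. But the same inequality chain works with $\leq$ in place of $<$ at an included endpoint: if $a_{i_0} \leq u_{i_0}$ and $u_{i_0} + \epsilon \leq b_{i_0}$, then $\epsilon \leq b_{i_0} - a_{i_0}$, and to get a strict contradiction one notes that equality throughout would force $u_{i_0} = a_{i_0}$ and $u_{i_0}+\epsilon = b_{i_0}$ simultaneously, which is still a valid point of $R_\mathcal{M}$ only in edge cases that are absorbed by taking $\epsilon$ strictly larger or by the closure conventions; since the statement as written uses the open rectangle notation $(a_i,b_i)$, the strict-inequality version above is all that is needed, so this is a non-issue here. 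Overall there is no serious obstacle: the entire proof is the one-line coordinate estimate, and the only work is bookkeeping the extended-real conventions.
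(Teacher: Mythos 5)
Your proof is correct; the paper states Proposition~\ref{epsilontrivialrectangle} without any proof, and your direct coordinate computation (nonvanishing of $\varphi_{\mathcal{M}}(u,u+\vec{\epsilon})$ forces $u$ and $u+\vec{\epsilon}$ both into the open rectangle, whence $\epsilon < b_{i_0}-a_{i_0}$ at a minimizing index, a contradiction) is exactly the intended, omitted justification. Your side remarks are also accurate: the extended-real conventions make infinite edge lengths harmless, and it is precisely the openness of $(a_i,b_i)$ that lets the boundary value $\epsilon=\min_{1\leq i\leq n}\{b_i-a_i\}$ be included, consistent with the paper's standing convention that the underlying rectangles in this section are open.
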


\begin{proposition}\label{2epsilontrivialiffepsiloninterleaved}
An $n$-parameter persistence module $\mathcal{M}$ is $2\epsilon$-trivial if and only if it is $\epsilon$-interleaved with the zero persistence module.
\end{proposition}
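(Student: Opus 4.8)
The plan is to prove the two implications separately, working directly from the definitions of $\epsilon$-interleaving (Definition~\ref{interleaving}) and $\epsilon$-triviality (Definition~\ref{epsilontrivial}). For the forward direction, I would assume $\mathcal{M}$ is $2\vec{\epsilon}$-trivial, i.e.\ $\varphi_{\mathcal{M}}(u,u+2\vec{\epsilon})=0$ for all $u\in\mathbb{R}^n$, and construct an explicit $\epsilon$-interleaving between $\mathcal{M}$ and the zero module $\mathcal{Z}$. Here the only possible morphisms are $f_u\colon\mathcal{M}_u\to \mathcal{Z}_{u+\vec{\epsilon}}=0$ (the zero map) and $g_u\colon \mathcal{Z}_u=0\to\mathcal{M}_{u+\vec{\epsilon}}$ (also the zero map). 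I would then check the four interleaving diagrams: the two square diagrams in \eqref{squarediagramsfirst} commute trivially since they factor through a zero vector space; the triangle diagram on the right involving $\mathcal{Z}$ is trivial; and the triangle diagram on the left reads $g_{u+\vec{\epsilon}}\circ f_u = \varphi_{\mathcal{M}}(u,u+2\vec{\epsilon})$, whose left-hand side is $0$ and whose right-hand side is $0$ precisely by the $2\vec{\epsilon}$-triviality hypothesis. So the diagrams commute and $\mathcal{M}$ is $\epsilon$-interleaved with the zero module.

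For the converse, suppose $\mathcal{M}$ is $\epsilon$-interleaved with the zero module $\mathcal{Z}$ via morphisms $f_u,g_u$. Since $\mathcal{Z}_v=0$ for all $v$, necessarily $f_u=0$ and $g_u=0$ for all $u$. The commuting left triangle in \eqref{trianglediagramsfirst} (with $\mathcal{M}$ on the outside) states that the composite $\mathcal{M}_u\xrightarrow{f_u}\mathcal{Z}_{u+\vec{\epsilon}}\xrightarrow{g_{u+\vec{\epsilon}}}\mathcal{M}_{u+2\vec{\epsilon}}$ equals the transition map $\varphi_{\mathcal{M}}(u,u+2\vec{\epsilon})$. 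But the composite is $g_{u+\vec{\epsilon}}\circ f_u = 0\circ 0 = 0$, hence $\varphi_{\mathcal{M}}(u,u+2\vec{\epsilon})=0$ for every $u\in\mathbb{R}^n$, which is exactly the statement that $\mathcal{M}$ is $2\vec{\epsilon}$-trivial.

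I do not anticipate a genuine obstacle here; the statement is essentially a bookkeeping exercise unwinding the definitions, and the content is entirely carried by the left-hand triangle diagram which identifies the ``round trip'' $g\circ f$ with the double-shift transition map. The one point requiring a small remark is the degenerate case $\epsilon=0$: then $2\vec{\epsilon}$-triviality says $\varphi_{\mathcal{M}}(u,u)=\id_{\mathcal{M}_u}=0$, forcing $\mathcal{M}_u=0$ for all $u$, and correspondingly a $0$-interleaving with the zero module forces $f_u=\id$ through $0$, again giving $\mathcal{M}=\mathcal{Z}$; so the equivalence still holds, now reading ``$\mathcal{M}$ is the zero module iff $\mathcal{M}$ is $0$-interleaved with the zero module.'' I would state this explicitly so the reader is not troubled by the boundary case, and otherwise present the two implications as above.
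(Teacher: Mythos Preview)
Your proposal is correct and follows essentially the same approach as the paper: both argue by taking the unique (zero) morphisms to and from the zero module and observing that the only nontrivial constraint among the interleaving diagrams is the left triangle, which equates $g_{u+\vec{\epsilon}}\circ f_u=0$ with $\varphi_{\mathcal{M}}(u,u+2\vec{\epsilon})$. Your version is slightly more explicit (noting that $f_u,g_u$ are forced to be zero and treating the $\epsilon=0$ boundary case), but the substance is identical.
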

 
\begin{proof}
Let $\mathcal{M}$ be an $n$-parameter persistence module and let $\mathcal{N}$ be the zero persistence module, that is $\mathcal{N}_u=0$ for all $u \in \mathbb{R}^n$. Consider the following diagrams.
    
\begin{center}
\begin{equation} \label{squarediagrams2}
 	        \begin{tikzcd}                       
			    \mathcal{M}_u \arrow[r] \arrow["f_u" , d]    
				& \mathcal{M}_v \arrow[d, "f_v"] \\
				0=\mathcal{N}_{u+ \vec{\epsilon}} \arrow[r]
				& \mathcal{N}_{v+ \vec{\epsilon}}=0
			\end{tikzcd}
			\begin{tikzcd}
				0=\mathcal{N}_u \arrow[r] \arrow["g_u" , d]
				& \mathcal{N}_v=0 \arrow[d, "g_v"] \\
			  \mathcal{M}_{u+ \vec{\epsilon}} \arrow[r]
				& \mathcal{M}_{v+ \vec{\epsilon}}
			\end{tikzcd}
			\vspace*{7mm}
\end{equation}
\begin{equation} \label{trianglediagrams2}			
			\begin{tikzcd}[row sep=small]
				& \mathcal{N}_{u+ \vec{\epsilon}}=0 \arrow[dd, "g_{u+ \vec{\epsilon}}"] \\
				\mathcal{M}_u \arrow[ur, "f_u"] \arrow[dr] & \\
				&\mathcal{M}_{u+ 2\vec{\epsilon}}
			\end{tikzcd}
			\begin{tikzcd}[row sep=small]
				&\mathcal{M}_{u+ \vec{\epsilon}}  \arrow[dd, "f_{u+ \vec{\epsilon}}"] \\
				0=\mathcal{N}_u \arrow[ur, "g_u"] \arrow[dr] & \\
				&\mathcal{N}_{u+ 2\vec{\epsilon}}=0  
			\end{tikzcd}
\end{equation}
\end{center}

Suppose the persistence module $\mathcal{M}$ is $2\epsilon$-trivial. So, by Definition~\ref{epsilontrivial} the transition maps $\varphi_{\mathcal{M}}(u, u + 2\epsilon) = 0$ for all $u \in \mathbb{R}^n$. This implies that all diagrams in (\ref{squarediagrams2}) commute for all $u\preceq v \in \mathbb{R}^n$ and all diagrams in (\ref{trianglediagrams2})  commute for all $u \in \mathbb{R}^n$. Hence, the persistence module $\mathcal{M}$ is $\epsilon$-interleaved with the zero persistence module. Conversely, if the persistence module $\mathcal{M}$ is $\epsilon$-interleaved with the zero persistence module, then all diagrams in (\ref{squarediagrams2}) commute for all $u\preceq v \in \mathbb{R}^n$ and all diagrams in (\ref{trianglediagrams2}) commute for all $u \in \mathbb{R}^n$. Thus, by commutativity of left triangle diagrams in (\ref{trianglediagrams2}) for all $u \in \mathbb{R}^n$, the transition maps $\varphi_{\mathcal{M}}(u, u + 2\epsilon) = 0$ for all $u \in \mathbb{R}^n$. Hence, by Definition~\ref{epsilontrivial} the persistence module $\mathcal{M}$ is $2\epsilon$-trivial.
    
\end{proof}

The following result \cite[Proposition~15]{Dey} will be used later.

\begin{proposition} \label{closureequality}
Let $\mathcal{M}=\mathcal{I}^I$ and $\mathcal{N}=\mathcal{I}^J$ be two interval persistence modules with underlying intervals $I$ and $J$, respectively. Then, $d_I(\mathcal{M}, \mathcal{N} )=d_I(\overline{\mathcal{M}}, \overline{\mathcal{N}})$ where $\overline{\mathcal{M}}\doteq \mathcal{I}^{\Bar{I}}$ and $\overline{\mathcal{N}} \doteq \mathcal{I}^{\Bar{J}}$.
\end{proposition}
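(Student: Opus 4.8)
The plan is to prove the two inequalities $d_I(\mathcal{M}, \mathcal{N}) \leq d_I(\overline{\mathcal{M}}, \overline{\mathcal{N}})$ and $d_I(\overline{\mathcal{M}}, \overline{\mathcal{N}}) \leq d_I(\mathcal{M}, \mathcal{N})$ separately, in each case producing an $\epsilon$-interleaving of one pair from an $\epsilon$-interleaving of the other. For the direction $d_I(\mathcal{M}, \mathcal{N}) \leq d_I(\overline{\mathcal{M}}, \overline{\mathcal{N}})$, I would start from an $\epsilon$-interleaving $(f, g)$ between $\overline{\mathcal{M}} = \mathcal{I}^{\bar I}$ and $\overline{\mathcal{N}} = \mathcal{I}^{\bar J}$; the goal is to restrict it. Since $\mathcal{M}_u = \mathbb{k}$ exactly on $I \subseteq \bar I$ and is $0$ off $I$, and likewise for $\mathcal{N}$, a candidate interleaving $(f', g')$ for the non-closed modules is: set $f'_u = f_u$ when both $\mathcal{M}_u$ and $\mathcal{N}_{u+\vec\epsilon}$ are nonzero (i.e.\ $u \in I$ and $u + \vec\epsilon \in J$), and $f'_u = 0$ otherwise, and symmetrically for $g'$. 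One must check that the two square diagrams and the two triangle diagrams of Definition~\ref{interleaving} still commute. The squares are automatic because every map in sight is either an identity on $\mathbb{k}$ or zero, and a square of such maps over the poset $\mathbb{R}^n$ commutes iff it does not have the shape "identity$\circ$(nonzero-into-nonzero) $\neq$ zero"; the key point is that if $\mathcal{M}_u \to \mathcal{M}_v$ is the identity and $f'_v = 0$, then we need $f'_u = 0$ too, which holds because $v \in I$ forces $u + \vec\epsilon \notin J$ to be the reason $f'_v$ vanished, and $u \preceq v$ gives $u + \vec\epsilon \preceq v + \vec\epsilon$, so $u + \vec\epsilon \notin J$ would... — here one has to be slightly careful, since $J$ is an interval, not a down-set. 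The cleaner route is to observe that along the interleaving triangles the composite $g'_{u+\vec\epsilon} \circ f'_u$ must equal $\varphi_{\mathcal{M}}(u, u+2\vec\epsilon)$, and for interval modules this transition map is nonzero iff $u, u+2\vec\epsilon \in I$, in which case (by convexity of the interval $I$ and of $\bar I$) the intermediate point $u + \vec\epsilon$ lies in $\bar I$, and — using that $f, g$ already interleave the closed modules — one deduces $u+\vec\epsilon \in J$ as well; hence $f'_u$ and $g'_{u+\vec\epsilon}$ are both the identity, and the triangle commutes. When $\varphi_{\mathcal{M}}(u, u+2\vec\epsilon) = 0$ the triangle commutes trivially since the other composite is $0$ or factors through a point where we set the map to $0$.

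For the reverse direction $d_I(\overline{\mathcal{M}}, \overline{\mathcal{N}}) \leq d_I(\mathcal{M}, \mathcal{N})$, I would take an $\epsilon$-interleaving $(f, g)$ between $\mathcal{M}$ and $\mathcal{N}$ with $\epsilon > d_I(\mathcal{M},\mathcal{N})$ arbitrary (passing to the infimum at the end, which is legitimate since the quantity we bound is an infimum), and \emph{extend} it to $\overline{\mathcal{M}}, \overline{\mathcal{N}}$. Now $\overline{\mathcal{M}}_u = \mathbb{k}$ on $\bar I \supseteq I$, so there are new points to define maps at. Again the only real choice is identity-or-zero: set $\bar f_u = \id_{\mathbb{k}}$ precisely when $u \in \bar I$ and $u + \vec\epsilon \in \bar J$, and $0$ otherwise; symmetrically for $\bar g$. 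The verification of the four diagrams is the same bookkeeping as above, now driven by the fact that $\varphi_{\overline{\mathcal{M}}}(u, u+2\vec\epsilon) \neq 0$ iff $u, u+2\vec\epsilon \in \bar I$. The subtle case is where $u \in \bar I \setminus I$ is a boundary point: one needs to know that $f$'s and $g$'s being defined on the (possibly) smaller set $I, J$ still forces the compatibility. This is where I would use a limiting/monotonicity argument: $\bar f_u$ should agree with $f_{u'}$ for $u' \in I$ approaching $u$, and since transition maps within an interval module are identities, the value is forced; there is no freedom and hence no inconsistency.

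The main obstacle is the diagram-chasing at boundary points of the intervals, because intervals in $\mathbb{R}^n$ (even rectangles) are only required to be convex-and-connected, not open or closed, so the membership bookkeeping "$u \in I$, $u+\vec\epsilon \in J$, $u+2\vec\epsilon \in I$" does not reduce to a one-line down-set argument; one genuinely uses the interval axioms (closure under $u \preceq w \preceq v$) together with the fact that passing to $\bar I$ only adds limit points, whose membership is determined by nearby interior behaviour. A clean way to organize this, which I would adopt, is to reduce everything to the single statement: for interval persistence modules, an $\epsilon$-interleaving is equivalent to the combinatorial condition that for every $u$, $\varphi_{\mathcal{M}}(u,u+2\vec\epsilon)\neq 0 \Rightarrow \mathcal{N}_{u+\vec\epsilon}\neq 0$ and symmetrically — a fact one proves once and then applies to both $(\mathcal{M},\mathcal{N})$ and $(\overline{\mathcal{M}},\overline{\mathcal{N}})$. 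Then $d_I(\mathcal{M},\mathcal{N}) = d_I(\overline{\mathcal{M}},\overline{\mathcal{N}})$ follows because, for all relevant $u$, $u,u+2\vec\epsilon \in I \iff u, u+2\vec\epsilon \in \bar I$ fails in general but the \emph{implication} needed survives taking closures by a density argument. I expect roughly a page once the combinatorial reformulation is isolated as a sub-lemma.
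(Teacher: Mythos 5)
The paper does not actually prove this proposition; it imports it verbatim as \cite[Proposition~15]{Dey}, so there is no internal argument to compare yours against. Judged on its own terms, your proposal has a genuine gap: the sub-lemma you propose to ``prove once and then apply'' --- that two interval modules are $\epsilon$-interleaved if and only if for every $u$ one has $\varphi_{\mathcal{M}}(u,u+2\vec\epsilon)\neq 0 \Rightarrow \mathcal{N}_{u+\vec\epsilon}\neq 0$ and symmetrically --- is false, even for rectangles. Take $I=(0,3)\times(0,3)$, $J=(1,5)\times(1,2)$ and $\epsilon=1$. The set $\{u: u,u+2\vec\epsilon\in I\}$ is $(0,1)^2$, whose $\vec\epsilon$-shift $(1,2)^2$ lies in $J$, and $\{u: u,u+2\vec\epsilon\in J\}$ is empty, so your combinatorial condition holds; yet Theorem~A gives $d_I=3/2>1$. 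The obstruction is exactly the square diagrams you wave off as ``automatic'': with $u^*=(0.5,0.5)$ and $v=(3.5,0.6)\succeq u^*$ one has $v\notin I$ but $u^*+\vec\epsilon, v+\vec\epsilon\in J$, so the square forces $f_{u^*}=0$, while the triangle at $u^*$ forces $f_{u^*}\neq 0$. In general the support of a nonzero $f$ must be suitably order-closed inside $I\cap(J-\vec\epsilon)$, and this is an extra constraint your pointwise condition does not capture. Your closing remark that the equivalence ``fails in general but the implication needed survives by a density argument'' concedes the problem without resolving it.

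A second, independent gap: for the direction $d_I(\mathcal{M},\mathcal{N})\leq d_I(\overline{\mathcal{M}},\overline{\mathcal{N}})$ you try to restrict an $\epsilon$-interleaving of the closures to an $\epsilon$-interleaving of $\mathcal{M},\mathcal{N}$ at the \emph{same} $\epsilon$. This cannot work in general: already in one parameter, $\mathcal{I}^{[0,1]}$ is $0$-interleaved with itself while $\mathcal{I}^{(0,1)}$ and $\mathcal{I}^{[0,1]}$ admit no $0$-interleaving, even though the distance is $0$. So this direction also needs the $\delta$-slack you only invoke for the reverse one. A route that avoids all of this bookkeeping is to prove $d_I(\mathcal{I}^I,\mathcal{I}^{\bar I})=0$ for a single interval $I$ (a $\delta$-interleaving for every $\delta>0$) and then conclude by the triangle inequality; if you want to keep your direct approach, you must replace the false pointwise criterion with the correct characterization of morphisms between interval modules, including the order-closedness of supports.
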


From the definition of the interior of an interval, the following corollary is immediate.

\begin{corollary}\label{interiorequality}
Let $\mathcal{M}=\mathcal{I}^I$ and $\mathcal{N}=\mathcal{I}^J$ be two interval persistence modules. Then, $d_I(\mathcal{M}, \mathcal{N} )=d_I(\mathcal{M}^{\mathrm{o}}, \mathcal{N}^{\mathrm{o}})$ where $\mathcal{M}^{\mathrm{o}}\doteq \mathcal{I}^{{I}^{\mathrm{o}}}$ and $\mathcal{N}^{\mathrm{o}}\doteq \mathcal{I}^{{J}^{\mathrm{o}}}$.
\end{corollary}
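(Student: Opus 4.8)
The claim is that taking interiors does not change the interleaving distance between two interval persistence modules. The natural strategy is to reduce it to Proposition~\ref{closureequality}, which states the analogous fact for closures. The key elementary observation is that, for any $1$-parameter interval (and hence for any finite product of them, i.e.\ any interval in the sense relevant here since we can apply the fact coordinatewise), the closure of the interior equals the closure, provided the interior is nonempty; more precisely $\overline{I^{\mathrm{o}}} = \overline{I}$ whenever $I^{\mathrm{o}} \neq \varnothing$, and the degenerate cases where $I$ has empty interior must be handled separately.

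\textbf{Step 1: the non-degenerate case.} Assume both $I$ and $J$ have nonempty interior. Apply Proposition~\ref{closureequality} to the pair $\mathcal{M}^{\mathrm{o}} = \mathcal{I}^{I^{\mathrm{o}}}$ and $\mathcal{N}^{\mathrm{o}} = \mathcal{I}^{J^{\mathrm{o}}}$: this gives $d_I(\mathcal{M}^{\mathrm{o}}, \mathcal{N}^{\mathrm{o}}) = d_I(\overline{\mathcal{M}^{\mathrm{o}}}, \overline{\mathcal{N}^{\mathrm{o}}}) = d_I(\mathcal{I}^{\overline{I^{\mathrm{o}}}}, \mathcal{I}^{\overline{J^{\mathrm{o}}}})$. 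Now invoke the topological identity $\overline{I^{\mathrm{o}}} = \overline{I}$ (and similarly for $J$), which for intervals in $\mathbb{R}^n$ with nonempty interior is standard, to rewrite the right-hand side as $d_I(\mathcal{I}^{\overline{I}}, \mathcal{I}^{\overline{J}}) = d_I(\overline{\mathcal{M}}, \overline{\mathcal{N}})$. Finally apply Proposition~\ref{closureequality} once more, this time to the original pair $\mathcal{M}, \mathcal{N}$, to get $d_I(\overline{\mathcal{M}}, \overline{\mathcal{N}}) = d_I(\mathcal{M}, \mathcal{N})$. Chaining these equalities yields $d_I(\mathcal{M}, \mathcal{N}) = d_I(\mathcal{M}^{\mathrm{o}}, \mathcal{N}^{\mathrm{o}})$.

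\textbf{Step 2: the degenerate cases.} If $I$ has empty interior, then $\mathcal{M}^{\mathrm{o}} = \mathcal{I}^{\varnothing}$ is the zero persistence module. One then checks directly that $d_I(\mathcal{M}, \mathbf{0})$ and $d_I(\mathcal{M}^{\mathrm{o}}, \mathbf{0}) = d_I(\mathbf{0},\mathbf{0}) = 0$ behave compatibly: concretely, an interval with empty interior in $\mathbb{R}^n$ must be ``thin'' in at least one coordinate, so $\mathcal{M}$ is $\epsilon$-trivial for every $\epsilon > 0$ (its transition maps across any positive shift vanish), hence by Proposition~\ref{2epsilontrivialiffepsiloninterleaved} it is $\epsilon$-interleaved with $\mathbf{0}$ for every $\epsilon > 0$, so $d_I(\mathcal{M}, \mathbf{0}) = 0 = d_I(\mathcal{M}^{\mathrm{o}}, \mathbf{0})$. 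Combining this with the triangle inequality for $d_I$ (valid once we note $d_I$ is an extended pseudometric) handles the subcase where one of $I, J$ is degenerate and the other is not, and the subcase where both are degenerate is immediate since then $\mathcal{M}^{\mathrm{o}} = \mathcal{N}^{\mathrm{o}} = \mathbf{0}$ and again both $\mathcal{M}$ and $\mathcal{N}$ are $\epsilon$-interleaved with $\mathbf{0}$ for all $\epsilon > 0$, forcing $d_I(\mathcal{M}, \mathcal{N}) = 0$.

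\textbf{Main obstacle.} The conceptual content is light, so the only real care needed is bookkeeping around the degenerate intervals: making sure that ``interior'' is understood as interior in $\mathbb{R}^n$ (not relative interior), and that the identity $\overline{I^{\mathrm{o}}} = \overline{I}$ genuinely requires $I^{\mathrm{o}} \neq \varnothing$ — otherwise it fails and one must route through the zero module instead. The author likely intends ``the following corollary is immediate'' precisely because the non-degenerate case is a two-line application of Proposition~\ref{closureequality} together with $\overline{I^{\mathrm{o}}} = \overline{I}$; if the paper's convention silently excludes degenerate intervals from consideration, Step 2 can be omitted entirely and the corollary is indeed a one-line consequence.
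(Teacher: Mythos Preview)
The paper itself offers no proof beyond the word ``immediate,'' so I will simply assess your argument.

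Your Step~1 has a genuine gap: you assert that ``any interval in the sense relevant here'' is a finite product of $1$-parameter intervals, and deduce $\overline{I^{\mathrm{o}}}=\overline{I}$ coordinatewise. But this conflates \emph{intervals} (Definition~\ref{interval}) with \emph{rectangles} (Definition~\ref{rectanglepersistencemodule}); the corollary is stated for the former, which are merely poset-convex, zigzag-connected subsets of $\mathbb{R}^n$ and need not be products. For such $I$ the identity $\overline{I^{\mathrm{o}}}=\overline{I}$ can fail even when $I^{\mathrm{o}}\ne\varnothing$: take $I=[0,1]^2\cup\bigl([1,2]\times\{0\}\bigr)\subset\mathbb{R}^2$, which one checks directly satisfies all three conditions of Definition~\ref{interval}, yet $I^{\mathrm{o}}=(0,1)^2$ so that $\overline{I^{\mathrm{o}}}=[0,1]^2\ne I=\overline{I}$. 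Your chain of equalities therefore breaks at the step $\mathcal{I}^{\overline{I^{\mathrm{o}}}}=\mathcal{I}^{\overline{I}}$.

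Two remarks. First, for \emph{rectangles} --- the only case the paper actually uses --- your argument is correct, and your Step~2 handling of degenerate interiors is careful and right (the key point, that $u,\,u+\vec\epsilon\in I$ forces the whole cube $\prod_i[u_i,u_i+\epsilon]\subseteq I$ by poset convexity, is exactly what is needed). Second, that same poset-convexity observation also repairs the general case: it shows that whenever $u,\,u+\vec\epsilon\in I$ one has $u+\vec{\epsilon'}\in I^{\mathrm{o}}$ for all $0<\epsilon'<\epsilon$, and from this one can build an explicit $\epsilon$-interleaving between $\mathcal{I}^I$ and $\mathcal{I}^{I^{\mathrm{o}}}$ for every $\epsilon>0$. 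Hence $d_I(\mathcal{I}^I,\mathcal{I}^{I^{\mathrm{o}}})=0$, and the corollary then follows from the triangle inequality without invoking $\overline{I^{\mathrm{o}}}=\overline{I}$ at all.
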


\subsection{Bottleneck Distance}
In this subsection, we briefly define the bottleneck distance.  Later in Section \ref{bottleneck distance for rectangle}, we extend our results to calculate the bottleneck distance for rectangle decomposable persistence modules.
	
\begin{definition}[\cite{Bjerkevik}]\label{matching}
 Let $A$ and $B$ be multisets of intervals.  An \textbf{$\epsilon$-matching} between $A$ and $B$ is a partial multibijection $\sigma : A \nrightarrow B$ such that
 	  
  \begin{itemize} 
\item [(i)]  for all $I \in A - \coim \sigma$, $\mathcal{I}^{I}$ is $2\epsilon$-trivial,
\item [(ii)] for all $I \in B - \im \sigma$, $\mathcal{I}^{I}$ is $2\epsilon$-trivial,
\item [(iii)] for all $I \in \coim  \sigma$, $\mathcal{I}^{I}$ and $\mathcal{I}^{\sigma(I)}$ are $\epsilon$-interleaved.   
  \end{itemize}	   
\end{definition}
	
If there is an $\epsilon$-matching $\sigma$ between the barcodes $B(\mathcal{M})$ and $B(\mathcal{N})$ of persistence modules $\mathcal{M}$ and $\mathcal{N}$, then we say that $\mathcal{M}$ and $\mathcal{N}$ are \textbf{$\epsilon$-matched}.

Now, we can define the bottleneck distance between the barcodes of multiparameter interval decomposable persistence modules.
	
\begin{definition}[\cite{Bjerkevik}] \label{bot.dis.1}
Let $\mathcal{M}$ and $\mathcal{N}$ be two interval decomposable persistence modules and $S$ be the set of all partial multibijections between the barcodes $B(\mathcal{M})$ and $B(\mathcal{N})$.  Then, the \textbf{bottleneck distance} between interval decomposable persistence modules is defined as          
$$
d_B(\mathcal{M}, \mathcal{N}) \doteq \inf  \{ \epsilon \in [0, +\infty): \mathcal{M} \ \textrm{and} \ \mathcal{N} \ \textrm{are} \ \epsilon \textrm{-matched} \}.
$$ 
If there is no such an $\epsilon$, we put $d_B(\mathcal{M}, \mathcal{N})\doteq +\infty$.
\end{definition} 

Note that, this is also equivalent to setting $\displaystyle d_B(\mathcal{M}, \mathcal{N})=\inf_{\sigma \in S} \cost(\sigma)$.  Also, bear in mind that if $\mathcal{M}$ and $\mathcal{N}$ are finitely presented persistence modules, then one can use minimum in the definition of the bottleneck distance instead of infimum since there always exists a partial multibijection $\overline{\sigma}: B(\mathcal{M})\nrightarrow B(\mathcal{N}) $  such that $ \displaystyle d_B(\mathcal{M}, \mathcal{N})= \cost(\overline{\sigma})$. Following \cite{Cerrii}, we call the partial multibijection $\overline{\sigma}$ \textbf{optimal}.

\section{Computing the Interleaving Distance for Rectangle Persistence Modules}

In this section, we show that the interleaving distance on rectangle persistence modules can be computed by a closed formula using the geometry of underlying rectangles.

In the following, we have results for open rectangle persistence modules. Fortunately, by Proposition~\ref{closureequality} and by Corollary~\ref{interiorequality}, we know that, for any interval persistence modules $\mathcal{M}=\mathcal{I}^I$ and $\mathcal{N}=\mathcal{I}^J$,  $d_I(\mathcal{M}, \mathcal{N} )=d_I(\overline{\mathcal{M}}, \overline{\mathcal{N}})=d_I(\mathcal{M}^{\mathrm{o}}, \mathcal{N}^{\mathrm{o}})$ where $\overline{\mathcal{M}}\doteq \mathcal{I}^{\Bar{I}}$ and $\overline{\mathcal{N}} \doteq \mathcal{I}^{\Bar{J}}$, and $\mathcal{M}^{\mathrm{o}}\doteq \mathcal{I}^{{I}^{\mathrm{o}}}$ and $\mathcal{N}^{\mathrm{o}}\doteq \mathcal{I}^{{J}^{\mathrm{o}}}$. Thus, after proving one for open rectangles, our results are also valid for closed, non-open, and non-closed rectangle persistence modules or, more importantly, finitely presented rectangle persistence modules.

Throughout this section, $\mathcal{M}$ and $\mathcal{N}$ always denote two rectangle persistence modules with underlying rectangles $R_\mathcal{M}=(a_1, b_1) \times (a_2, b_2)$ and $R_\mathcal{N}=(c_1, d_1) \times (c_2, d_2)$ respectively, unless otherwise stated.  Also, we always assume that  that $a_i, c_i\in \mathbb{R}\cup \{-\infty\}$, $b_i,d_i \in \mathbb{R}\cup \{+\infty\}$, and $a_i<b_i$, $c_i<d_i$ for  $i=1, 2$.

\begin{lemma}\label{non-trivialmorphism}
Let $\mathcal{M}$ and $\mathcal{N}$ be two rectangle persistence modules. The set of non-trivial morphisms $f \colon \mathcal{M} \rightarrow \mathcal{N}$ is non-empty if and only if 
$$ c=(c_1, c_2) \preceq a=(a_1, a_2) \prec d=(d_1, d_2)\preceq b=(b_1, b_2).$$ 
Moreover, if such set is non-empty, it contains the morphism $f: \mathcal{M} \rightarrow \mathcal{N}$ such that $f_u$ is the identity map for all $u\in R_\mathcal{M}\cap R_\mathcal{N}$, and the zero map for all  $u\notin R_\mathcal{M}\cap R_\mathcal{N}$.
\end{lemma}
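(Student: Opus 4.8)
The plan is to analyze what a morphism $f\colon \mathcal{M}\to\mathcal{N}$ of rectangle persistence modules can look like, using the fact that each $\mathcal{M}_u,\mathcal{N}_u$ is either $0$ or $\mathbb{k}$ and each transition map is $\id_\mathbb{k}$ or $0$. For such a morphism, each component $f_u\colon \mathcal{M}_u\to\mathcal{N}_u$ is scalar multiplication by some $\lambda_u\in\mathbb{k}$ whenever $u\in R_\mathcal{M}\cap R_\mathcal{N}$, and is forced to be $0$ whenever $u\notin R_\mathcal{M}\cap R_\mathcal{N}$ (since the source or target is the zero space). First I would show, using the commuting squares of Definition~\ref{morphismbetweenpersistencemodules} together with the fact that $R_\mathcal{M}$ and $R_\mathcal{N}$ are rectangles (hence their intersection, when nonempty, is again a rectangle and in particular connected in the sense of Definition~\ref{interval}), that the scalar $\lambda_u$ is constant on $R_\mathcal{M}\cap R_\mathcal{N}$: along any comparable pair $u\preceq v$ with both in the intersection, the square gives $\lambda_v\cdot\id = \id\cdot\lambda_u$, so $\lambda_u=\lambda_v$; then connectedness propagates this equality everywhere on the intersection. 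Hence a morphism is non-trivial precisely when $R_\mathcal{M}\cap R_\mathcal{N}\neq\emptyset$ \emph{and} the constant scalar is nonzero, and in that case rescaling gives the identity-on-the-intersection morphism claimed in the ``moreover'' part.

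It therefore remains to characterize geometrically when $R_\mathcal{M}\cap R_\mathcal{N}$ supports a nonzero morphism, i.e. when it is nonempty \emph{and} the ``identity on the intersection'' assignment actually defines a morphism (all squares commute). I would split this into the two coordinates. The intersection is nonempty iff $(\max\{a_i,c_i\},\min\{b_i,d_i\})\neq\emptyset$ for $i=1,2$, i.e. $\max\{a_i,c_i\}<\min\{b_i,d_i\}$. For the square at $u\preceq v$ to commute one needs: there is no pair $u\preceq v$ with $u\in R_\mathcal{M}\cap R_\mathcal{N}$, $v\in R_\mathcal{M}$, $v\in R_\mathcal{N}$ but the two routes disagree — the only way commutativity can fail is if $u\in R_\mathcal{M}\cap R_\mathcal{N}$, $v\in R_\mathcal{M}\setminus R_\mathcal{N}$ and $\varphi_\mathcal{M}(u,v)\neq 0$, forcing $0 = \varphi_\mathcal{N}(u,v)\circ f_u = f_v\circ\varphi_\mathcal{M}(u,v) = \id$, a contradiction (and symmetrically with the roles of $\mathcal{M},\mathcal{N}$ swapped and $u\in R_\mathcal{N}\setminus R_\mathcal{M}$, but there $f_u=0$ so no constraint). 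Ruling such a configuration out amounts to saying: whenever $u\in R_\mathcal{M}\cap R_\mathcal{N}$ and $u\preceq v\in R_\mathcal{M}$, also $v\in R_\mathcal{N}$; translated coordinatewise on the rectangles this says $a_i\in[c_i,d_i)$ and $b_i\le d_i$ for $i=1,2$, i.e. $c\preceq a$ and $b\preceq d$ fails — wait, rather it gives $c_i\le a_i$ and $b_i\le d_i$. Combining with nonemptiness ($a_i<d_i$) yields exactly $c\preceq a\prec d\preceq b$; then I would double-check that under these inequalities the intersection equals $(a_1,d_1)\times(a_2,d_2)$, every relevant square commutes, and no square in the other direction is violated (there $f$ vanishes on the relevant corner).

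For the converse I would simply verify directly that if $c\preceq a\prec d\preceq b$ then $R_\mathcal{M}\cap R_\mathcal{N}=(a_1,d_1)\times(a_2,d_2)\neq\emptyset$ and the ``identity on the intersection, zero elsewhere'' family is a bona fide morphism: for $u\preceq v$ one checks the square in each of the cases ($u,v$ both in, both out, $u$ out $v$ in, $u$ in $v$ out) — the only nontrivial case, $u$ in and $v$ out, cannot happen because $v\in R_\mathcal{M}$ and $v\succeq u\succ a$ forces $v\in(a_1,d_1)\times\cdots$ only if $v\prec d$, but if $v\not\prec d$ then $v\notin R_\mathcal{N}$ while $v$ could still be in $R_\mathcal{M}$; however then $\varphi_\mathcal{M}(u,v)$ maps into $\mathcal{M}_v$ which may be nonzero, so one needs $\varphi_\mathcal{N}(u,v)=0$ \emph{and} $f_v=0$, both of which hold — so the square commutes. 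The main obstacle I anticipate is the bookkeeping in this last verification: being careful about the conventions for $\pm\infty$ endpoints, about half-open versus the genuine ``$\prec$'' (strict in all coordinates) structure of rectangles, and about distinguishing the two asymmetric square configurations; the algebraic content (constancy of the scalar via connectedness, and the forcing $\id=0$ contradiction) is short, but assembling the coordinatewise inequalities into the clean statement $c\preceq a\prec d\preceq b$ without sign errors is where care is needed.
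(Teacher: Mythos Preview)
Your overall strategy---reduce to a constant scalar on $R_\mathcal{M}\cap R_\mathcal{N}$ and then characterize when the identity-on-the-intersection assignment is a morphism---is sound and close to the paper's argument, but you have misidentified which naturality squares obstruct the construction, and this leads you to the wrong inequality.

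Concretely, take $u\in R_\mathcal{M}\cap R_\mathcal{N}$ and $v\in R_\mathcal{M}\setminus R_\mathcal{N}$ with $u\preceq v$. Then $\mathcal{N}_v=0$, so $f_v=0$, and both legs of the square
\[
f_v\circ\varphi_\mathcal{M}(u,v)=0\circ\id_\mathbb{k}=0,\qquad \varphi_\mathcal{N}(u,v)\circ f_u=0\circ\id_\mathbb{k}=0
\]
agree; there is \emph{no} contradiction here, contrary to what you wrote. The two configurations that \emph{do} break commutativity are:
\begin{itemize}
\item[(i)] $u\in R_\mathcal{M}\setminus R_\mathcal{N}$, $v\in R_\mathcal{M}\cap R_\mathcal{N}$: here $f_u=0$ but $f_v=\id_\mathbb{k}$ and $\varphi_\mathcal{M}(u,v)=\id_\mathbb{k}$, so $f_v\circ\varphi_\mathcal{M}(u,v)=\id_\mathbb{k}\neq 0=\varphi_\mathcal{N}(u,v)\circ f_u$. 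Excluding this forces $c\preceq a$.
\item[(ii)] $u\in R_\mathcal{M}\cap R_\mathcal{N}$, $v\in R_\mathcal{N}\setminus R_\mathcal{M}$: here $f_u=\id_\mathbb{k}$ and $\varphi_\mathcal{N}(u,v)=\id_\mathbb{k}$ but $f_v=0$, so $\varphi_\mathcal{N}(u,v)\circ f_u=\id_\mathbb{k}\neq 0=f_v\circ\varphi_\mathcal{M}(u,v)$. Excluding this forces $d\preceq b$.
\end{itemize}
Your condition ``whenever $u\in R_\mathcal{M}\cap R_\mathcal{N}$ and $u\preceq v\in R_\mathcal{M}$, also $v\in R_\mathcal{N}$'' instead yields $b\preceq d$, the reverse of what the lemma asserts; the line ``combining with nonemptiness \ldots\ yields exactly $c\preceq a\prec d\preceq b$'' is therefore unjustified. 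The paper's proof handles exactly cases (i) and (ii): for the forward direction it builds, given $c\npreceq a$, a point $a^+\in R_\mathcal{M}\setminus R_\mathcal{N}$ below any candidate $u$ (case~(i)), and given $d\npreceq b$, a point $d^-\in R_\mathcal{N}\setminus R_\mathcal{M}$ above $u$ (case~(ii)). Once you correct the obstructing squares, your plan goes through and matches the paper's argument.
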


\begin{proof}
Let $ f \colon \mathcal{M} \rightarrow \mathcal{N}$ be a non-trivial morphism. So, there is $u\in \mathbb{R}^2$ such that $f_u\neq 0$. Thus, $a \prec u \prec b$ and $c \prec u \prec d$, and so $a  \prec u   \prec d$. Hence, it is enough to show that $c \preceq  a$ and $d \preceq  b$.

We will first show that $c \preceq  a$. Suppose on the contrary that $c \npreceq  a$, i.e., there exists at least one $i\in\{1,2\}$ for which $c_i > a_i$. Without loss of generality, let $c_1 > a_1$ so that $c_1 > -\infty$. Now, if $u=(u_1, u_2)\notin R_\mathcal{M}$, then $f_u=0$. If $u=(u_1, u_2)\in R_\mathcal{M}$, then $a_2 < u_2$. Thus, there exists $a_2^+ \in \mathbb{R}$ such that $a_2 < a_2^+ \leq u_2$. Now, if $u_1 < c_1$, then $u\notin R_\mathcal{N}$, and so $f_u=0$. Otherwise, if $u_1 \geq c_1$, since we are assuming that $a_1 < c_1$, there exits $a_1^+ \in \mathbb{R}$ such that $a_1 < a_1^+ \leq c_1 \leq u_1$. This implies that $a^+=(a_1^+,a_2^+)\notin  R_\mathcal{N}$ and $a \prec  a^+ \preceq u$. So, $a^+ \in R_\mathcal{M}$. Now, consider the following diagram, where the horizontal maps are the transition morphisms of $\mathcal{M}$ and $\mathcal{N}$, respectively.

\begin{center}
\begin{tikzcd}                              
\mathbb{k}= \mathcal{M}_{a^+} \arrow[r, "\id_{\mathbb{k}}"] \arrow ["0=f_{a^+}", d]    & \mathcal{M}_u=\mathbb{k} \arrow[d, "f_u"] \\
	0=\mathcal{N}_{a^+} \arrow[r]
	& \mathcal{N}_u
\end{tikzcd}
\end{center}

The diagram must be commutative since $f \colon \mathcal{M} \rightarrow \mathcal{N}$ is a morphism. This implies that $f_u=0$ also for the case $u_1 \geq c_1$. Hence, $f_u=0$ for all $u\in \mathbb{R}^2$. So, it contradicts $ f \colon \mathcal{M} \rightarrow \mathcal{N}$ being a non-trivial morphism. Hence, $c \preceq a$.

Still assuming that  $ f \colon \mathcal{M} \rightarrow \mathcal{N}$ is a non-trivial morphism, we will show that $d \preceq  b$. Suppose on the contrary that $d \npreceq  b$, i.e. there exists at least one $i\in\{1,2\}$ for which $d_i > b_i$. Without loss of generality, let $d_1 > b_1$ so that $b_1 < +\infty$. Now, if $u=(u_1, u_2)\notin R_\mathcal{N}$, then $f_u=0$. If $u=(u_1, u_2)\in R_\mathcal{N}$, then $u_2 < d_2$. Thus, there exists $d_2^- \in \mathbb{R}$ such that $u_2 < d_2^- \leq d_2$. Now, if $b_1 < u_1$, then $u\notin R_\mathcal{M}$, and so $f_u=0$. Otherwise, if $b_1 \geq u_1$, since we are assuming that  $b_1 < d_1$, there exits $d_1^- \in \mathbb{R}$ such that $u_1 < b_1 \leq d_1^- < d_1$. This implies that $d^-=(d_1^-,d_2^-)\notin  R_\mathcal{M}$ and $u \preceq d^- \prec d $, and so $d^- \in  R_\mathcal{N}$. Now, consider the following diagram, where the horizontal maps are the interval morphisms of $\mathcal{M}$ and $\mathcal{N}$, respectively.

\begin{center}
			\begin{tikzcd}                              
				\mathcal{M}_u \arrow[r] \arrow ["f_u" , d]   
				& \mathcal{M}_{d^-}=0 \arrow[d, "f_{d^-}=0"] \\
				\mathbb{k}=\mathcal{N}_u \arrow[r, "\id_{\mathbb{k}}"]
				& \mathcal{N}_{d^-}=\mathbb{k} 
			\end{tikzcd}
\end{center}

The diagram must be commutative since $f \colon \mathcal{M} \rightarrow \mathcal{N}$ is a morphism. This implies that $f_u=0$ also for the case $b_1 \geq u_1$. Hence, $f_u=0$ for all $u\in \mathbb{R}^2$. So, it contradicts $ f \colon \mathcal{M} \rightarrow \mathcal{N}$ being a non-trivial morphism. Hence, $d \preceq b$.  \\

For the proof of the converse statement, assuming $c \preceq a \prec d \preceq b$, we can define a non-trivial map $f \colon \mathcal{M} \rightarrow \mathcal{N}$ as follows:
\begin{equation} 
f_u=
\begin{cases} 
     \id_{\mathbb{k}} &  \textrm{if} \ a \preceq u \prec d, \\
      0 & \textrm{otherwise}
\end{cases} \label{nontrivialmorphism}
\end{equation} 
for any $u \in \mathbb{R}^2$. 

Let us now check that it is a valid morphism $\mathcal{M} \rightarrow \mathcal{N}$. For $a\prec u \preceq v \prec d$, we have the following commutative diagram.
\begin{center}
			\begin{tikzcd}                            
				\mathbb{k}=\mathcal{M}_u \arrow["\id_{\mathbb{k}}" ,r] \arrow ["\id_{\mathbb{k}}" , d]   
				& \mathcal{M}_v=\mathbb{k} \arrow["\id_{\mathbb{k}}" ,d] \\
				\mathbb{k}=\mathcal{N}_u \arrow["\id_{\mathbb{k}}", r]
				& \mathcal{N}_v=\mathbb{k} 
			\end{tikzcd} 
\end{center}

For $a \nprec u$ and $u \preceq v$, observe that the following diagram commutes.
\begin{center}
\begin{tikzcd} 
	0=\mathcal{M}_u \arrow["0" ,r] \arrow["0", d]   
	& \mathcal{M}_v \arrow[d] \\
	\mathcal{N}_u \arrow[r]
	&\mathcal{N}_v 
\end{tikzcd} 
\end{center}

For $a \prec u  \preceq v$ and $v \nprec d$,  observe that the following diagram commutes.
\begin{center}
			\begin{tikzcd}                            
				\mathcal{M}_u \arrow[r] \arrow [d]   
				& \mathcal{M}_v \arrow["0", d] \\
				\mathcal{N}_u \arrow["0" , r]
				& \mathcal{N}_v=0 
			\end{tikzcd} 
\end{center}

Therefore, the following diagram commutes for all $u \preceq v \in  \mathbb{R}^2$.
\begin{center}
			\begin{tikzcd}                            
				\mathcal{M}_u \arrow[r] \arrow ["f_u", d]   
				& \mathcal{M}_v \arrow["f_v", d] \\
				\mathcal{N}_u \arrow[r]
				& \mathcal{N}_v 
			\end{tikzcd} 
\end{center}

Hence, $f \colon \mathcal{M} \rightarrow \mathcal{N}$ as in (\ref{nontrivialmorphism}) is the desired non-trivial morphism.
\end{proof}

\begin{corollary} \label{cor-nontrivialmorphism}  
Let $\mathcal{M}$ and $\mathcal{N}$ be two rectangle persistence modules. Let $\vec{\epsilon}=(\epsilon, \epsilon)$ such that $\epsilon \geq 0$ and let $\mathcal{N}(\vec{\epsilon})$ be the $\epsilon$-shift of the persistence module $\mathcal{N}$. There exists a non-trivial morphism $f \colon \mathcal{M} \rightarrow \mathcal{N}(\vec{\epsilon})$, among which the one defined by \[
\displaystyle f_u=
\begin{cases} 
     \id_{\mathbb{k}} &  \textrm{if} \ u \in R_\mathcal{M}\cap R_{\mathcal{N}(\vec{\epsilon})} \\
      0 & \textrm{otherwise} 
\end{cases} \,,
\] if and only if 
$$\displaystyle 
\max \Big\{ \max_{i=1, 2} \{c_i-a_i \}, \max_{i=1, 2} \{ d_i-b_i \} \Big\} \leq \epsilon < \min_{i=1, 2} \{ d_i-a_i \}.
$$

\end{corollary}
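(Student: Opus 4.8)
The plan is to deduce this from Lemma~\ref{non-trivialmorphism} by replacing $\mathcal{N}$ with its $\epsilon$-shift $\mathcal{N}(\vec{\epsilon})$. First I would check that $\mathcal{N}(\vec{\epsilon})$ is again a rectangle persistence module of the kind allowed in this section. By Definition~\ref{shift}, $\mathcal{N}(\vec{\epsilon})_u = \mathcal{N}_{u+\vec{\epsilon}}$, which equals $\mathbb{k}$ precisely when $u + \vec{\epsilon} \in R_\mathcal{N}$ and is $0$ otherwise, with transition maps still identities where nonzero; hence $\mathcal{N}(\vec{\epsilon})$ is the rectangle persistence module with underlying rectangle
\[
R_{\mathcal{N}(\vec{\epsilon})} = (c_1-\epsilon,\, d_1-\epsilon) \times (c_2-\epsilon,\, d_2-\epsilon).
\]
Since $\epsilon \in [0,+\infty)$ is finite, the conventions (\ref{conventionslist1})--(\ref{conventionslist3}) give $c_i - \epsilon \in \mathbb{R}\cup\{-\infty\}$, $d_i - \epsilon \in \mathbb{R}\cup\{+\infty\}$ and $c_i - \epsilon < d_i - \epsilon$, so $\mathcal{N}(\vec{\epsilon})$ meets the standing assumptions of the section.

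Next I would apply Lemma~\ref{non-trivialmorphism} to the pair $(\mathcal{M}, \mathcal{N}(\vec{\epsilon}))$. Its two relevant corners are $(c_1-\epsilon, c_2-\epsilon)$ and $(d_1-\epsilon, d_2-\epsilon)$, so the lemma says that the set of non-trivial morphisms $\mathcal{M} \to \mathcal{N}(\vec{\epsilon})$ is non-empty if and only if
\[
(c_1-\epsilon,\, c_2-\epsilon) \preceq (a_1, a_2) \prec (d_1-\epsilon,\, d_2-\epsilon) \preceq (b_1, b_2),
\]
and that in that case this set contains the morphism that is $\id_\mathbb{k}$ on $R_\mathcal{M} \cap R_{\mathcal{N}(\vec{\epsilon})}$ and $0$ elsewhere, which is exactly the morphism written in the statement. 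It then remains only to rewrite the displayed poset relation coordinatewise: $c_i - \epsilon \leq a_i$ for $i=1,2$ means $\max_{i=1,2}\{c_i - a_i\} \leq \epsilon$; $d_i - \epsilon \leq b_i$ for $i=1,2$ means $\max_{i=1,2}\{d_i - b_i\} \leq \epsilon$; and $a_i < d_i - \epsilon$ for $i=1,2$ means $\epsilon < \min_{i=1,2}\{d_i - a_i\}$. Combining these three equivalences gives precisely the claimed inequality, so the two directions of the corollary follow by chaining with Lemma~\ref{non-trivialmorphism}.

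I do not anticipate a real obstacle; the only point demanding care is the arithmetic with $\pm\infty$. One should verify that each of the three coordinatewise equivalences above survives the degenerate cases allowed here: for instance $c_i = a_i = -\infty$ forces $c_i - a_i = 0 \leq \epsilon$, while $d_i = +\infty$ or $a_i = -\infty$ forces $d_i - a_i = +\infty > \epsilon$, all of which follow from (\ref{conventionslist1})--(\ref{conventionslist3}). The explicit morphism appearing in the statement is literally the one furnished by the \emph{moreover} part of Lemma~\ref{non-trivialmorphism} applied to $(\mathcal{M}, \mathcal{N}(\vec{\epsilon}))$, so no additional construction is needed there.
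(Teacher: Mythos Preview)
Your proposal is correct and follows essentially the same approach as the paper: apply Lemma~\ref{non-trivialmorphism} to the pair $(\mathcal{M},\mathcal{N}(\vec{\epsilon}))$, obtain the condition $c-\vec{\epsilon}\preceq a\prec d-\vec{\epsilon}\preceq b$, and rewrite it coordinatewise. You are in fact a bit more careful than the paper, explicitly checking that $\mathcal{N}(\vec{\epsilon})$ satisfies the standing hypotheses and that the $\pm\infty$ conventions make the coordinatewise equivalences go through.
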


\begin{proof}
By Lemma~\ref{non-trivialmorphism}, there is a  non-trivial morphism $f \colon \mathcal{M} \rightarrow \mathcal{N}(\vec{\epsilon})$ if and only if $c- \vec{\epsilon} \preceq a \prec d-\vec{\epsilon} \preceq b$. We note that:

\begin{itemize}
    \item $c- \vec{\epsilon} \preceq a $ if and only if $c_i-\epsilon \leq a_i$ for $i=1, 2$, or equivalently $\epsilon \geq c_i-a_i$ for $i=1, 2$;
     \item $a \prec d-\vec{\epsilon}$ if and only if  $a_i < d_i-\epsilon$ for $i=1, 2$, or equivalently $\epsilon < d_i-a_i$ for $i=1, 2$;
    \item $d- \vec{\epsilon} \preceq b$ if and only if $d_i-\epsilon \leq b_i,$ for $i=1, 2$, or equivalently  $\epsilon \geq d_i-b_i$ for $i=1, 2$.
\end{itemize}

Hence, there is a non-trivial morphism $f \colon \mathcal{M} \rightarrow \mathcal{N}(\vec{\epsilon})$ if and only if 
$$
\displaystyle 
\max \Big\{ \max_{i=1, 2} \{c_i-a_i \}, \max_{i=1, 2} \{ d_i-b_i \} \Big\} \leq \epsilon < \min_{i=1, 2} \{ d_i-a_i \}.
$$
\end{proof}

\begin{lemma} \label{lem:interleaving-trival-mrph}
Let $\mathcal{M}$ and $\mathcal{N}$ be two rectangle persistence modules. We have
\[
d_I(\mathcal{M}, \mathcal{N}) \leq \max \Big\{ \min_{i=1, 2} \frac{b_i-a_i}{2}, \min_{i=1, 2} \frac{d_i-c_i}{2} \Big\}.
\]
\end{lemma}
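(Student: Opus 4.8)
The goal is to produce, for every $\epsilon$ strictly larger than $\max\{\min_{i}(b_i-a_i)/2,\ \min_{i}(d_i-c_i)/2\}$, an $\epsilon$-interleaving between $\mathcal{M}$ and $\mathcal{N}$; taking the infimum over such $\epsilon$ then gives the claimed bound. The key observation is that the two terms inside the outer $\max$ are, up to a factor of $2$, exactly the thresholds from Proposition~\ref{epsilontrivialrectangle}: if $\epsilon \geq \min_i(b_i-a_i)/2$ then $2\epsilon \geq \min_i(b_i-a_i)$, so by Proposition~\ref{epsilontrivialrectangle} the module $\mathcal{M}$ is $2\epsilon$-trivial, and likewise $\mathcal{N}$ is $2\epsilon$-trivial when $\epsilon \geq \min_i(d_i-c_i)/2$. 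But here both conditions need not hold simultaneously — the outer $\max$ only forces one of them — so a direct appeal to "$2\epsilon$-trivial $\Leftrightarrow$ $\epsilon$-interleaved with zero" (Proposition~\ref{2epsilontrivialiffepsiloninterleaved}) is not immediately enough on its own.

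Here is how I would bridge that gap. Fix $\epsilon$ with $\epsilon > \max\{\min_i(b_i-a_i)/2,\ \min_i(d_i-c_i)/2\}$. Then \emph{both} $2\epsilon > \min_i(b_i-a_i)$ and $2\epsilon > \min_i(d_i-c_i)$ hold, so by Proposition~\ref{epsilontrivialrectangle} both $\mathcal{M}$ and $\mathcal{N}$ are $2\epsilon$-trivial. By Proposition~\ref{2epsilontrivialiffepsiloninterleaved}, $\mathcal{M}$ is $\epsilon$-interleaved with the zero module $0$, and $\mathcal{N}$ is $\epsilon$-interleaved with $0$. The remaining step is to take the two trivial interleaving morphisms $f\colon \mathcal{M}\to \mathcal{N}(\vec\epsilon)$ and $g\colon \mathcal{N}\to \mathcal{M}(\vec\epsilon)$ to be the zero morphisms and check directly that the four diagrams of Definition~\ref{interleaving} commute: the two square diagrams commute trivially because one leg is zero, and the two triangle diagrams commute because the composite $\mathcal{M}_u \to \mathcal{M}_{u+2\vec\epsilon}$ is the transition map $\varphi_\mathcal{M}(u,u+2\vec\epsilon)$, which vanishes by $2\epsilon$-triviality of $\mathcal{M}$, and symmetrically for $\mathcal{N}$. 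Hence $\mathcal{M}$ and $\mathcal{N}$ are $\epsilon$-interleaved for every such $\epsilon$.

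Finally, taking the infimum over all admissible $\epsilon$ yields
\[
d_I(\mathcal{M},\mathcal{N}) \leq \max \Big\{ \min_{i=1,2} \tfrac{b_i-a_i}{2},\ \min_{i=1,2}\tfrac{d_i-c_i}{2} \Big\},
\]
which is the assertion. I expect no serious obstacle here; the only subtlety worth being careful about is the role of the outer $\max$ versus the inner $\min$ — one must notice that $\epsilon$ exceeding the $\max$ forces \emph{both} $2\epsilon$-triviality statements, after which everything reduces to Propositions~\ref{epsilontrivialrectangle} and~\ref{2epsilontrivialiffepsiloninterleaved} plus the elementary diagram chase with zero morphisms. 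Conventions involving $\pm\infty$ (e.g.\ when some $b_i-a_i=+\infty$, in which case that term does not realize the $\min$) should be handled in passing, but they do not affect the argument since the relevant inequalities are preserved under the conventions in~\eqref{conventionslist1}--\eqref{conventionslist3}.
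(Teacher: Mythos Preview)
Your proposal is correct and follows essentially the same route as the paper: take the zero morphisms $f$ and $g$, observe that both $\mathcal{M}$ and $\mathcal{N}$ are $2\epsilon$-trivial (the paper verifies this in-line rather than citing Proposition~\ref{epsilontrivialrectangle}), and conclude that the triangle diagrams commute. The only cosmetic difference is that the paper sets $\epsilon$ equal to the right-hand side and obtains the interleaving directly, whereas you take $\epsilon$ strictly larger and pass to the infimum; note also that your remark in the first paragraph that ``the outer $\max$ only forces one of them'' is a momentary slip---$\epsilon \ge \max\{A,B\}$ already gives both $\epsilon \ge A$ and $\epsilon \ge B$, as you correctly use in the second paragraph.
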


\begin{proof}

If
\[ \max \Big\{ \min_{i=1, 2} \frac{b_i-a_i}{2}, \min_{i=1, 2} \frac{d_i-c_i}{2} \Big\}=+\infty,
\]
then there is nothing to prove. Thus, let
\begin{equation}\label{lessthaninfinity}
    \epsilon \doteq \max \Big\{ \min_{i=1, 2} \frac{b_i-a_i}{2}, \min_{i=1, 2} \frac{d_i-c_i}{2} \Big\} < +\infty.
\end{equation} 

We note that $\epsilon > 0$. So, we can consider $\mathcal{M}(\vec{\epsilon})$ and $\mathcal{N}(\vec{\epsilon})$. Let us take the maps $f \colon \mathcal{M} \rightarrow \mathcal{N}(\vec{\epsilon})$ and $g \colon \mathcal{N} \rightarrow \mathcal{M}(\vec{\epsilon})$ to be trivial. Thus, all square diagrams~(\ref{squarediagramsfirst})  in the Definition~\ref{interleaving} are commutative. Now consider the following diagram.

\[
\begin{tikzcd}[row sep=4em]
{} & \mathcal{N}_{u+ \vec{\epsilon}} \arrow{dr}{g_{u+ \vec{\epsilon}}=0} \\
\mathcal{M}_u \arrow{ur}{f_u=0} \arrow{rr}{\varphi_\mathcal{M}(u, u+2\vec{\epsilon})} && \mathcal{M}_{u+ 2\vec{\epsilon}}
\end{tikzcd}
\]

Note that, for all points $u\in \mathbb{R}^2$, the diagram above will be commutative if we have $\varphi_\mathcal{M}(u, u+2\vec{\epsilon})=0$. If $a\nprec u$, then it is so because $\mathcal{M}_u=0$. Suppose now that $a\prec u$. By Equation~\ref{lessthaninfinity}, we have 
$$ \epsilon=  \max \Big\{ \min_{i=1, 2} \frac{b_i-a_i}{2}, \min_{i=1, 2} \frac{d_i-c_i}{2} \Big\},$$
thus
$$\epsilon \geq  \min_{i=1, 2} \frac{b_i-a_i}{2}.
$$
Without loss of generality, let 
$$
\displaystyle \min_{i=1, 2} \frac{b_i-a_i}{2}=\frac{b_1-a_1}{2}.
$$ 
So, $a_1+2\epsilon \geq a_1+b_1-a_1=b_1$, which implies $a+2\vec{\epsilon} \nprec b$ so that $\mathcal{M}_{u+2\epsilon}=0$ for every $a\prec u$. Thus, $\varphi_\mathcal{M}(u, u+2\vec{\epsilon})=0$ for all points $u\in \mathbb{R}^2$.

Similarly, one can show that the following diagram is also commutative for all points $u\in \mathbb{R}^2$.

\begin{center}
\begin{tikzcd}[row sep=4em]
{} & \mathcal{M}_{u+ \vec{\epsilon}} \arrow{dr}{f_{u+ \vec{\epsilon}}=0} \\
\mathcal{N}_u \arrow{ur}{g_u=0} \arrow{rr}{\varphi_\mathcal{N}(u, u+2\vec{\epsilon})} && \mathcal{N}_{u+ 2\vec{\epsilon}}
\end{tikzcd}   
\end{center}

Therefore, the trivial morphisms $ f \colon \mathcal{M} \rightarrow \mathcal{N}(\vec{\epsilon})$ and $g \colon \mathcal{N} \rightarrow \mathcal{M}(\vec{\epsilon})$ are $\epsilon$-interleaving morphisms such that all diagrams in Definition~\ref{interleaving} are commutative for every point in $\mathbb{R}^2$. Hence, the rectangle persistence modules $\mathcal{M}$ and $\mathcal{N}$ are $\epsilon$-interleaved, and thus, we can conclude that
\[
d_I(\mathcal{M}, \mathcal{N})\leq \epsilon= \max \Big\{ \min_{i=1, 2} \frac{b_i-a_i}{2}, \min_{i=1, 2} \frac{d_i-c_i}{2} \Big\}.
\]
\end{proof}

\begin{lemma}\label{maxinequality}
Let $\mathcal{M}$ and $\mathcal{N}$ be two rectangle persistence modules. If
\[
\max \Big\{ \| c-a \|_{\infty}, \| d-b \|_{\infty} \Big\} < \max \Big\{ \min_{i=1, 2} \frac{b_i-a_i}{2}, \min_{i=1, 2} \frac{d_i-c_i}{2} \Big\} 
\]
then
\[ 
\displaystyle \max \Big\{ \| c-a \|_{\infty}, \| d-b \|_{\infty} \Big\} < \min \Big\{  \min_{i=1, 2} \{ d_i-a_i \},  \min_{i=1, 2} \{ b_i-c_i \}  \Big\}.
\]
 \end{lemma}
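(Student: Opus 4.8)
The plan is to reduce the statement to a one-line triangle inequality, after two preliminary observations. Write $\mu := \max\{\|c-a\|_\infty, \|d-b\|_\infty\}$. First I would note that everything in sight is symmetric under interchanging $\mathcal{M}$ and $\mathcal{N}$, i.e.\ under the substitution $a\leftrightarrow c$, $b\leftrightarrow d$: the number $\mu$ is unchanged, and both the right-hand side of the hypothesis, $\max\{\min_{i=1,2}\frac{b_i-a_i}{2},\min_{i=1,2}\frac{d_i-c_i}{2}\}$, and the right-hand side of the conclusion, $\min\{\min_{i=1,2}\{d_i-a_i\},\min_{i=1,2}\{b_i-c_i\}\}$, are invariant. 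Hence I may assume without loss of generality that the maximum in the hypothesis is attained by $\min_{i=1,2}\frac{b_i-a_i}{2}$, so the hypothesis reads
\[
\mu < \min_{i=1,2}\frac{b_i-a_i}{2}, \qquad\text{equivalently}\qquad 2\mu < b_i - a_i \ \text{ for } i=1,2 .
\]
In particular $\mu < +\infty$.

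Next I would extract the only consequences of the definition of $\mu$ that are used: for $i=1,2$ one has $|c_i-a_i|\le\mu$ and $|d_i-b_i|\le\mu$, hence $a_i-c_i\ge-\mu$ and $d_i-b_i\ge-\mu$. Combining with $2\mu < b_i-a_i$ gives, for each $i\in\{1,2\}$,
\[
b_i - c_i = (b_i-a_i) + (a_i-c_i) \ge (b_i-a_i) - \mu > 2\mu - \mu = \mu,
\]
\[
d_i - a_i = (d_i-b_i) + (b_i-a_i) \ge -\mu + (b_i-a_i) > -\mu + 2\mu = \mu .
\]
Taking the minimum over $i$ and then over the two resulting quantities yields $\mu < \min\{\min_{i=1,2}\{d_i-a_i\},\min_{i=1,2}\{b_i-c_i\}\}$, which is exactly the claim.

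The one point that needs care is the arithmetic in $\overline{\mathbb{R}}$, since $a_i,c_i$ may equal $-\infty$ and $b_i,d_i$ may equal $+\infty$. Here the hypothesis does the work: since $\mu<+\infty$ while $\mu\ge|c_i-a_i|$ and $\mu\ge|d_i-b_i|$, the conventions in (\ref{conventionslist1})--(\ref{conventionslist3}) force $a_i$ and $c_i$ to be simultaneously finite or simultaneously $-\infty$, and likewise $b_i$ and $d_i$ simultaneously finite or simultaneously $+\infty$. In each of these cases the telescoping identities $b_i-c_i=(b_i-a_i)+(a_i-c_i)$ and $d_i-a_i=(d_i-b_i)+(b_i-a_i)$ hold and the displayed inequalities go through verbatim---for instance when $b_i=d_i=+\infty$ both $b_i-c_i$ and $d_i-a_i$ are simply $+\infty>\mu$. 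I expect this bookkeeping with infinities, rather than the inequality itself, to be the only mildly delicate part of the argument.
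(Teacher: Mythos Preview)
Your argument is correct. Both your proof and the paper's rest on the same elementary telescoping estimate, but the organization differs: the paper argues by contradiction after assuming without loss of generality that the minimum on the right of the \emph{conclusion} is $d_1-a_1$, and then shows that $d_1-a_1\le\epsilon$ would force both $\tfrac{b_1-a_1}{2}\le\epsilon$ and $\tfrac{d_1-c_1}{2}\le\epsilon$, contradicting the hypothesis. You instead exploit the $\mathcal{M}\leftrightarrow\mathcal{N}$ symmetry to reduce on the \emph{hypothesis} side and then prove the inequality directly for every $i$. Your route is marginally cleaner and, unlike the paper's proof, explicitly addresses the extended-real arithmetic; the paper's version is slightly shorter because it only needs to handle a single index.
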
  

\begin{proof}
Without loss of generality, let us suppose that  
\[
\min \Big\{  \min_{i=1, 2} \{ d_i-a_i \},  \min_{i=1, 2} \{ b_i-c_i \}  \Big\}=d_1-a_1.
\]
Now, let $\epsilon \doteq \max \big \{ \| c-a \|_{\infty}, \| d-b \|_{\infty} \big \}$, and suppose on the contrary that $ d_1-a_1 \leq \epsilon$. Note that $\frac{d_1-c_1}{2} \leq \frac{d_1-a_1+\epsilon}{2}$ since $ \| c-a \|_{\infty} \leq \epsilon$ by definition of $\epsilon$. Thus, we have $\frac{d_1-c_1}{2} \leq \epsilon$ as $d_1-a_1 \leq \epsilon$ by assumption.
Similarly, note that $\frac{b_1-a_1}{2} \leq \frac{d_1-a_1+\epsilon}{2}$ since  $ \| d-b \|_{\infty}\leq \epsilon$ by definition of $\epsilon$. Thus, we have $\frac{b_1-a_1}{2} \leq  \epsilon$ as $ d_1-a_1 \leq \epsilon$ by assumption. Therefore, we have

\[
\min_{i=1, 2} \frac{b_i-a_i}{2}  \leq \epsilon \ \ \textrm{and} \ \ \min_{i=1, 2} \frac{d_i-c_i}{2} \leq \epsilon \,.
\]

Hence, we have
\[
\max \Big\{ \min_{i=1, 2} \frac{b_i-a_i}{2}, \min_{i=1, 2} \frac{d_i-c_i}{2} \Big\} \leq \epsilon \,,
\]
which contradicts the assumption.
\end{proof}

\begin{lemma} \label{lem:interleaving-nontrival-mrph}
Let $\mathcal{M}$ and $\mathcal{N}$ be two rectangle persistence modules. If
\[
\displaystyle \max \Big\{ \| c-a \|_{\infty}, \| d-b \|_{\infty} \Big\} < \min \Big\{  \min_{i=1, 2} \{ d_i-a_i \},  \min_{i=1, 2} \{ b_i-c_i \}  \Big\},
\]
then
\[
   d_I(\mathcal{M}, \mathcal{N})\leq  \max \Big\{ \| c-a \|_{\infty}, \| d-b \|_{\infty} \Big\}. 
\]

\end{lemma}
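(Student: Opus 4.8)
First I would set $\epsilon \doteq \max\{\|c-a\|_\infty,\|d-b\|_\infty\}$; if $\epsilon=+\infty$ there is nothing to prove, so assume $\epsilon<+\infty$. The plan is then to exhibit an explicit $\epsilon$-interleaving between $\mathcal{M}$ and $\mathcal{N}$ built out of the identity-on-overlap morphisms supplied by Corollary~\ref{cor-nontrivialmorphism}. By the hypothesis we have $\epsilon < \min_{i=1,2}\{d_i-a_i\}$ and $\epsilon < \min_{i=1,2}\{b_i-c_i\}$. Since moreover $\max_{i}\{c_i-a_i\} \le \|c-a\|_\infty \le \epsilon$ and $\max_{i}\{d_i-b_i\} \le \|d-b\|_\infty \le \epsilon$, Corollary~\ref{cor-nontrivialmorphism} yields a morphism $f\colon\mathcal{M}\to\mathcal{N}(\vec{\epsilon})$ that equals $\id_{\mathbb{k}}$ on $R_\mathcal{M}\cap R_{\mathcal{N}(\vec{\epsilon})}$ and $0$ elsewhere. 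Applying the same corollary with $\mathcal{M}$ and $\mathcal{N}$ interchanged (replacing $(a,b,c,d)$ by $(c,d,a,b)$), the symmetric inequalities $\max_{i}\{a_i-c_i\},\ \max_{i}\{b_i-d_i\}\le\epsilon$ and $\epsilon<\min_{i}\{b_i-c_i\}$ also hold, so it yields $g\colon\mathcal{N}\to\mathcal{M}(\vec{\epsilon})$ that equals $\id_{\mathbb{k}}$ on $R_\mathcal{N}\cap R_{\mathcal{M}(\vec{\epsilon})}$ and $0$ elsewhere. Because $f$ and $g$ are morphisms of persistence modules, the two square diagrams in Definition~\ref{interleaving} commute for all $u\preceq v\in\mathbb{R}^2$.

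It remains to check the two triangle diagrams, namely $g_{u+\vec{\epsilon}}\circ f_u = \varphi_\mathcal{M}(u, u+2\vec{\epsilon})$ and $f_{u+\vec{\epsilon}}\circ g_u = \varphi_\mathcal{N}(u, u+2\vec{\epsilon})$ for every $u\in\mathbb{R}^2$. For the first: if $u\notin R_\mathcal{M}$ or $u+2\vec{\epsilon}\notin R_\mathcal{M}$, then $\mathcal{M}_u=0$ or $\mathcal{M}_{u+2\vec{\epsilon}}=0$ and both sides vanish; if instead $u, u+2\vec{\epsilon}\in R_\mathcal{M}$, then $\varphi_\mathcal{M}(u,u+2\vec{\epsilon})=\id_{\mathbb{k}}$, and I would show $u+\vec{\epsilon}\in R_\mathcal{N}$ by checking, for $i=1,2$, that $u_i+\epsilon > a_i+\epsilon \ge c_i$ (using $u_i>a_i$ and $\epsilon\ge c_i-a_i$) and $u_i+\epsilon < b_i-\epsilon \le d_i$ (using $u_i+2\epsilon<b_i$ and $\epsilon\ge b_i-d_i$); then $f_u=\id_{\mathbb{k}}=g_{u+\vec{\epsilon}}$ and the triangle commutes. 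The second triangle is handled by the mirror argument, using $\epsilon\ge a_i-c_i$ and $\epsilon\ge d_i-b_i$. Thus $(f,g)$ is an $\epsilon$-interleaving, so $\mathcal{M}$ and $\mathcal{N}$ are $\epsilon$-interleaved and $d_I(\mathcal{M},\mathcal{N})\le\epsilon=\max\{\|c-a\|_\infty,\|d-b\|_\infty\}$, as claimed.

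The step I expect to be the main obstacle is the geometric claim embedded in the triangle verification: that the point $u+\vec{\epsilon}$ lying halfway along any segment from $u$ to $u+2\vec{\epsilon}$ contained in $R_\mathcal{M}$ must fall inside $R_\mathcal{N}$. This is exactly where the bounds $\|c-a\|_\infty\le\epsilon$ and $\|d-b\|_\infty\le\epsilon$ enter coordinatewise, and where one must be careful that the $\pm\infty$ conventions of Subsection~\ref{NotionsandConventions} keep every displayed inequality meaningful when some of $a_i,b_i,c_i,d_i$ are infinite.
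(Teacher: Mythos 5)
Your proposal is correct and follows essentially the same route as the paper: both set $\epsilon=\max\{\|c-a\|_\infty,\|d-b\|_\infty\}$, invoke Corollary~\ref{cor-nontrivialmorphism} (in both directions) to obtain the identity-on-overlap morphisms $f$ and $g$, and verify the triangle diagrams by showing that whenever $\mathcal{M}_u$ and $\mathcal{M}_{u+2\vec{\epsilon}}$ are both nonzero the intermediate point $u+\vec{\epsilon}$ lies in $R_\mathcal{N}$. Your coordinatewise inequalities are exactly the vector-form estimates $c \prec a+\|c-a\|_\infty\cdot(1,1) \preceq u+\vec{\epsilon} \prec b-\vec{\epsilon} \preceq d$ used in the paper, so there is no substantive difference.
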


\begin{proof}
If
\[ 
\max \Big\{ \| c-a \|_{\infty}, \| d-b \|_{\infty} \Big\}=+\infty,
\]
then there is nothing to prove. Thus, let
\[ 0 \leq \epsilon \doteq \max \Big\{ \| c-a \|_{\infty}, \| d-b \|_{\infty} \Big\} < +\infty.\]
Hence,
\[
\epsilon \ge \max \Big\{ \max_{i=1, 2} \{ c_i-a_i\}, \max_{i=1, 2} \{d_i-b_i \} \Big\}
\]
and, by assumption,
\[
\epsilon < \min \Big\{   \min_{i=1, 2} \{ d_i-a_i \}, \min_{i=1, 2} \{ b_i-c_i \}   \Big\}\le \min_{i=1, 2} \{ d_i-a_i \}. \] 
Analogously,
\[
\epsilon\ge \max \Big\{  \max_{i=1, 2} \{ a_i-c_i\}, \max_{i=1, 2} \{b_i-d_i \} \Big\}
\]
and, by assumption,
\[
\epsilon < \min \Big\{   \min_{i=1, 2} \{ d_i-a_i \}, \min_{i=1, 2} \{ b_i-c_i \}   \Big\} \leq \min_{i=1, 2} \{ b_i-c_i \}. \]

By Corollary~\ref{cor-nontrivialmorphism}, we can take morphisms $f \colon \mathcal{M} \rightarrow \mathcal{N}(\vec{\epsilon})$ and $g \colon \mathcal{N} \rightarrow \mathcal{M}(\vec{\epsilon})$ to be non-trivial and precisely $\id_{\mathbb{k}} :\mathbb{k} \rightarrow \mathbb{k}$ at all points $u\in \mathbb{R}^2$ such that domain and codomain of $f_u$ (respectively $g_u$) are both non-trivial.

Let us show that they make the triangle diagram
\begin{center}
\begin{tikzcd}[row sep=4em]
{} & \mathcal{N}_{u+ \vec{\epsilon}} \arrow{dr}{g_{u+ \vec{\epsilon}}} \\
\mathcal{M}_u \arrow{ur}{f_u} \arrow{rr}{\varphi_\mathcal{M}(u, u+2\vec{\epsilon})} && \mathcal{M}_{u+ 2\vec{\epsilon}}
\end{tikzcd}    
\end{center}
commute for every $u\in \mathbb{R}^2$.

If $\mathcal{M}_u=0$ or $\mathcal{M}_{u+ 2\vec{\epsilon}}=0$, then the triangle diagram is clearly commutative. So, suppose that $\mathcal{M}_u=\mathbb{k}$ and $\mathcal{M}_{u+ 2\vec{\epsilon}}=\mathbb{k}$. Thus, $\varphi_{\mathcal{M}}(u, u+2\vec{\epsilon})=\id_{\mathbb{k}}$ and $a\prec u \prec b-2\vec{\epsilon}$. Thus, we have 
$$
u+\vec{\epsilon} \prec b-\vec{\epsilon} \preceq b- \| d-b \|_{\infty}\cdot (1,1) \preceq d
$$ 
since $  \| d-b \|_{\infty} \leq \epsilon $ by definition of $\epsilon$. Also, we have  
$$
c\prec a + \| c-a \|_{\infty} \cdot (1,1) \preceq a+ \vec{\epsilon} \preceq u+\vec{\epsilon}
$$ 
since $\| c-a \|_{\infty} \leq \epsilon$ by definition of $\epsilon$. These two inequalities imply that $c \prec u+\vec{\epsilon} \prec d$. It follows that $\mathcal{N}_{u+ \vec{\epsilon}}=\mathbb{k}$, and hence $f_u=\id_{\mathbb{k}}$ and $g_{u+ \vec{\epsilon}}=\id_{\mathbb{k}}$ make the diagram commute since we are assuming that $\mathcal{M}_u=\mathbb{k}$ and $\mathcal{M}_{u+ 2\vec{\epsilon}}=\mathbb{k}$. Hence, the triangle diagram above commutes for all points $u \in \mathbb{R}^2$. Analogously, the triangle diagram below is commutative for all points $u \in \mathbb{R}^2$.

\[
\begin{tikzcd}[row sep=4em]
{} & \mathcal{M}_{u+ \vec{\epsilon}} \arrow{dr}{f_{u+ \vec{\epsilon}}} \\
\mathcal{N}_u \arrow{ur}{g_u} \arrow{rr}{\varphi_\mathcal{N}(u, u+2\vec{\epsilon})} && \mathcal{N}_{u+ 2\vec{\epsilon}}
\end{tikzcd}
\]

Therefore, $f,g$ form an $\epsilon$-interleaving pair, implying that 
\[
d_I(\mathcal{M}, \mathcal{N}) \leq \epsilon= \max \Big\{ \| c-a \|_{\infty}, \| d-b \|_{\infty} \Big\}.
\]

\end{proof}

We are ready to give an upper bound for the interleaving distance between rectangle persistence modules.

\begin{theorem} \label{computationofinterleavingdistance}
Let $\mathcal{M}$ and $\mathcal{N}$ be two rectangle persistence modules. We have
\small \[
d_I(\mathcal{M}, \mathcal{N})\leq \min \Bigg\{ \max \Big\{ \min_{i=1, 2} \frac{b_i-a_i}{2}, \min_{i=1, 2} \frac{d_i-c_i}{2} \Big\}, \max \Big\{ \| c-a \|_{\infty}, \| d-b \|_{\infty} \Big\}  \Bigg\}.
\]
\end{theorem}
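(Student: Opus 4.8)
The plan is to combine the three preceding lemmas through a short case analysis on which of the two quantities inside the outer minimum is smaller. Write $A \doteq \max\{\min_{i=1,2}\frac{b_i-a_i}{2},\ \min_{i=1,2}\frac{d_i-c_i}{2}\}$ and $B \doteq \max\{\|c-a\|_{\infty},\ \|d-b\|_{\infty}\}$, so that the claimed bound reads $d_I(\mathcal{M},\mathcal{N}) \le \min\{A,B\}$.

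First I would dispose of the case $A \le B$. Here $\min\{A,B\} = A$, and Lemma~\ref{lem:interleaving-trival-mrph} gives directly $d_I(\mathcal{M},\mathcal{N}) \le A = \min\{A,B\}$ via the trivial interleaving pair. This also absorbs the degenerate subcases $A = +\infty$ or $B = +\infty$, where there is nothing to prove.

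It remains to treat the case $B < A$, in which $\min\{A,B\} = B$. Since $B < A$ is precisely the hypothesis of Lemma~\ref{maxinequality}, that lemma upgrades it to $B < \min\{\min_{i=1,2}\{d_i-a_i\},\ \min_{i=1,2}\{b_i-c_i\}\}$, which is exactly the hypothesis of Lemma~\ref{lem:interleaving-nontrival-mrph}. Applying the latter produces the non-trivial (essentially identity) interleaving pair and yields $d_I(\mathcal{M},\mathcal{N}) \le B = \min\{A,B\}$. Since the dichotomy $A \le B$ versus $B < A$ is exhaustive, the two cases together establish the inequality for all configurations of $R_\mathcal{M}$ and $R_\mathcal{N}$.

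I do not expect a genuine obstacle at this stage: the analytic content has been front-loaded into Lemmas~\ref{lem:interleaving-trival-mrph}, \ref{maxinequality}, and \ref{lem:interleaving-nontrival-mrph}, and the only care needed is to check that the case split is exhaustive and that the $\pm\infty$ conventions of Section~\ref{NotionsandConventions} create no edge case escaping this dichotomy — both of which are immediate. The accompanying matching lower bound, which would complete Theorem~A, is a separate argument not covered by this statement.
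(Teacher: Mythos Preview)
Your proposal is correct and follows essentially the same approach as the paper: a two-case split on whether $A \le B$ or $B < A$, invoking Lemma~\ref{lem:interleaving-trival-mrph} in the first case and chaining Lemma~\ref{maxinequality} with Lemma~\ref{lem:interleaving-nontrival-mrph} in the second. Your introduction of the shorthand $A$ and $B$ and the explicit remark that the $\pm\infty$ degeneracies are absorbed are cosmetic improvements, but the structure and the lemmas used are identical to the paper's proof.
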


\begin{proof}
First, suppose that
\[
\max \Big\{ \min_{i=1, 2} \frac{b_i-a_i}{2}, \min_{i=1, 2} \frac{d_i-c_i}{2} \Big\} \leq \max \Big\{ \| c-a \|_{\infty}, \| d-b \|_{\infty} \Big\}.
\]
By Lemma~\ref{lem:interleaving-trival-mrph}, we know that 
\[
d_I(\mathcal{M}, \mathcal{N})\leq \max \Big\{ \min_{i=1, 2} \frac{b_i-a_i}{2}, \min_{i=1, 2} \frac{d_i-c_i}{2} \Big\}.
\]
Hence, the result follows in this case.

Now, suppose that 
\[
\max \Big\{ \| c-a \|_{\infty}, \| d-b \|_{\infty} \Big\}  < \max \Big\{ \min_{i=1, 2} \frac{b_i-a_i}{2}, \min_{i=1, 2} \frac{d_i-c_i}{2} \Big\}.
\]
Then, by Lemma~\ref{maxinequality}, we know that
\[ 
\displaystyle \max \Big\{ \| c-a \|_{\infty}, \| d-b \|_{\infty} \Big\} < \min \Big\{  \min_{i=1, 2} \{ d_i-a_i \},  \min_{i=1, 2} \{ b_i-c_i \}  \Big\}.
\]
Therefore, by Lemma~\ref{lem:interleaving-nontrival-mrph},
\[
d_I(\mathcal{M}, \mathcal{N}) \leq \max \Big\{ \| c-a \|_{\infty}, \| d-b \|_{\infty} \Big\}.
\]

Hence, in any case
\small \[
d_I(\mathcal{M}, \mathcal{N}) \leq \min \Bigg\{ \max \Big\{ \min_{i=1, 2} \frac{b_i-a_i}{2}, \min_{i=1, 2} \frac{d_i-c_i}{2} \Big\}, \max \Big\{ \| c-a \|_{\infty}, \| d-b \|_{\infty} \Big\}  \Bigg\}.
\]

\end{proof}

Next, we will prove the converse of the inequality of Theorem~\ref{computationofinterleavingdistance}.  In particular, if we have two rectangle persistence modules, $\mathcal{M}$ and $\mathcal{N}$, with underlying rectangles $R_\mathcal{M}=(a_1, b_1) \times (a_2, b_2)$ and $R_\mathcal{N}=(c_1, d_1) \times (c_2, d_2)$, respectively, then we want to show that
\small \[
d_I(\mathcal{M}, \mathcal{N}) \geq  \min \Bigg\{ \max \Big\{ \min_{i=1, 2} \frac{b_i-a_i}{2}, \min_{i=1, 2} \frac{d_i-c_i}{2} \Big\}, \max \Big\{ \| c-a \|_{\infty}, \| d-b \|_{\infty} \Big\}  \Bigg\}.
\]

\begin{lemma}\label{forgtrivialmorphism}
Let $\mathcal{M}$ and $\mathcal{N}$ be $\epsilon$-interleaved persistence modules with two interleaving morphisms $f \colon \mathcal{M} \rightarrow \mathcal{N}(\vec{\epsilon})$ and $g \colon \mathcal{N} \rightarrow \mathcal{M}(\vec{\epsilon})$. If $f$ or $g$ is a trivial morphism, then
\[
\epsilon \geq \max \Big\{ \min_{i=1, 2} \frac{b_i-a_i}{2}, \min_{i=1, 2} \frac{d_i-c_i}{2} \Big\}. 
\]
\end{lemma}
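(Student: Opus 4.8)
The plan is to leverage the two triangle diagrams of Definition~\ref{interleaving}: triviality of a \emph{single} one of the interleaving morphisms already forces \emph{both} $\mathcal{M}$ and $\mathcal{N}$ to be $2\epsilon$-trivial, and the desired lower bound on $\epsilon$ then drops out of the (elementary) converse of Proposition~\ref{epsilontrivialrectangle}. Since the roles of $f$ and $g$ are interchangeable up to swapping $\mathcal{M}\leftrightarrow\mathcal{N}$ and $(a,b)\leftrightarrow(c,d)$, it is enough to treat the case in which $f\colon\mathcal{M}\to\mathcal{N}(\vec{\epsilon})$ is the trivial morphism, so that $f_u=0$ for every $u\in\mathbb{R}^2$.

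First I would invoke the commutativity of the left triangle in~(\ref{trianglediagramsfirst}), which reads $g_{u+\vec{\epsilon}}\circ f_u=\varphi_{\mathcal{M}}(u,u+2\vec{\epsilon})$; since $f_u=0$, this gives $\varphi_{\mathcal{M}}(u,u+2\vec{\epsilon})=0$ for all $u\in\mathbb{R}^2$, i.e.\ $\mathcal{M}$ is $2\epsilon$-trivial in the sense of Definition~\ref{epsilontrivial}. Then I would invoke the commutativity of the right triangle in~(\ref{trianglediagramsfirst}), which reads $f_{u+\vec{\epsilon}}\circ g_u=\varphi_{\mathcal{N}}(u,u+2\vec{\epsilon})$; since $f$ is trivial we also have $f_{u+\vec{\epsilon}}=0$, whence $\varphi_{\mathcal{N}}(u,u+2\vec{\epsilon})=0$ for all $u\in\mathbb{R}^2$, i.e.\ $\mathcal{N}$ is $2\epsilon$-trivial as well.

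It remains to convert $2\epsilon$-triviality of a rectangle module into a lower bound on $\epsilon$. For this I would first record the converse of Proposition~\ref{epsilontrivialrectangle}: if a rectangle persistence module with underlying rectangle $(a_1,b_1)\times(a_2,b_2)$ is $2\epsilon$-trivial, then $2\epsilon\ge\min_{i=1,2}\{b_i-a_i\}$. Indeed, were $2\epsilon<\min_{i=1,2}\{b_i-a_i\}$, then $a_i<b_i-2\epsilon$ for $i=1,2$ (using the conventions of Section~\ref{NotionsandConventions} when an endpoint is infinite), so one may choose $u=(u_1,u_2)$ with $u_i\in(a_i,b_i-2\epsilon)\cap\mathbb{R}$; then $a\prec u$ and $u+2\vec{\epsilon}\prec b$, so $\mathcal{M}_u=\mathcal{M}_{u+2\vec{\epsilon}}=\mathbb{k}$ and $\varphi_{\mathcal{M}}(u,u+2\vec{\epsilon})=\id_{\mathbb{k}}\ne0$, contradicting $2\epsilon$-triviality. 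Applying this both to $\mathcal{M}$ and to $\mathcal{N}$ yields $2\epsilon\ge\min_{i=1,2}\{b_i-a_i\}$ and $2\epsilon\ge\min_{i=1,2}\{d_i-c_i\}$, hence $\epsilon\ge\max\{\min_{i=1,2}\frac{b_i-a_i}{2},\,\min_{i=1,2}\frac{d_i-c_i}{2}\}$, as claimed.

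The only genuinely non-routine point---the step I expect to be the main obstacle---is the realization in the second paragraph that triviality of just one of $f,g$ propagates through \emph{both} triangle diagrams, so that it kills the transition maps $\varphi_\mathcal{M}(u,u+2\vec{\epsilon})$ and $\varphi_\mathcal{N}(u,u+2\vec{\epsilon})$ simultaneously (a first glance suggests it only affects $\mathcal{M}$). Everything else is a short geometric computation; the sole bookkeeping subtlety, namely infinite endpoints, is absorbed into the conventions fixed in Section~\ref{NotionsandConventions}.
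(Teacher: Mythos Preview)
Your proposal is correct and follows essentially the same route as the paper: assume without loss of generality that $f$ is trivial, use the two triangle diagrams in~(\ref{trianglediagramsfirst}) to deduce that both $\varphi_\mathcal{M}(u,u+2\vec{\epsilon})$ and $\varphi_\mathcal{N}(u,u+2\vec{\epsilon})$ vanish identically, and then convert $2\epsilon$-triviality of each rectangle module into the bound $2\epsilon\ge\min_i\{b_i-a_i\}$ and $2\epsilon\ge\min_i\{d_i-c_i\}$. The paper asserts this last implication without justification, whereas you spell out the converse of Proposition~\ref{epsilontrivialrectangle} explicitly, but the argument is otherwise the same.
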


\begin{proof}
Without loss of generality, suppose that $f \colon \mathcal{M} \rightarrow \mathcal{N}(\vec{\epsilon})$ is a trivial morphism. Thus, $f_u \colon \mathcal{M}_u \rightarrow \mathcal{N}_{u+\vec{\epsilon}}$ is a zero map for every $u\in \mathbb{R}^2$. By assumption, we know that $\mathcal{M}$ and $\mathcal{N}$ are $\epsilon$-interleaved. Hence, both of the diagrams below must be commutative for every $u\in \mathbb{R}^2$.
\begin{center}
\begin{tikzcd}[row sep=4em]
{} & \mathcal{N}_{u+ \vec{\epsilon}} \arrow{dr}{g_{u+ \vec{\epsilon}}} \\
\mathcal{M}_u \arrow{ur}{0=f_u} \arrow{rr}{\varphi_\mathcal{M}(u, u+2\vec{\epsilon})} && \mathcal{M}_{u+ 2\vec{\epsilon}}
\end{tikzcd}
\begin{tikzcd}[row sep=4em]
{} & \mathcal{M}_{u+ \vec{\epsilon}} \arrow{dr}{f_{u+ \vec{\epsilon}}=0} \\
\mathcal{N}_u \arrow{ur}{g_u} \arrow{rr}{\varphi_\mathcal{N}(u, u+2\vec{\epsilon})} && \mathcal{N}_{u+ 2\vec{\epsilon}}
\end{tikzcd}   
\end{center}
As a result, both transition maps ${\varphi_\mathcal{M}(u, u+2\vec{\epsilon})}$ and ${\varphi_\mathcal{N}(u, u+2\vec{\epsilon})}$ are zero linear maps for every $u\in \mathbb{R}^2$. Therefore, both $\mathcal{M}$ and $\mathcal{N}$ are $2\epsilon$-trivial persistence modules. This implies that $\displaystyle 2\epsilon \geq \min_{i=1, 2} \{b_i-a_i\}$ and $\displaystyle 2\epsilon \geq\min_{i=1, 2} \{d_i-c_i\}$. Hence, we can conclude that
\[
\epsilon \geq \max \Big\{ \min_{i=1, 2} \frac{b_i-a_i}{2}, \min_{i=1, 2} \frac{d_i-c_i}{2} \Big\}. 
\]

\end{proof}

\begin{lemma}\label{fandgaretrivialmorphisms}
Let $\mathcal{M}$ and $\mathcal{N}$ be $\epsilon$-interleaved persistence modules with two interleaving morphisms $f \colon \mathcal{M} \rightarrow \mathcal{N}(\vec{\epsilon})$ and $g \colon \mathcal{N} \rightarrow \mathcal{M}(\vec{\epsilon})$. If $f$ and $g$ are non-trivial morphisms, then
\[
\epsilon \geq \max \Big\{ \| c-a \|_{\infty}, \| d-b \|_{\infty} \Big\}. 
\]   
\end{lemma}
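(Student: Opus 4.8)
The plan is to bypass the interleaving triangles entirely: the inequality will come purely from the fact that $f$ and $g$ are \emph{non-trivial} morphisms $\mathcal{M}\to\mathcal{N}(\vec{\epsilon})$ and $\mathcal{N}\to\mathcal{M}(\vec{\epsilon})$, by applying Corollary~\ref{cor-nontrivialmorphism} twice. (Recall that the commuting squares~\eqref{squarediagramsfirst} say exactly that the interleaving data $f$, $g$ are morphisms of persistence modules into the appropriate shifted modules, so Corollary~\ref{cor-nontrivialmorphism} applies verbatim.)

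First I would apply Corollary~\ref{cor-nontrivialmorphism} to $f$: since there is a non-trivial morphism $\mathcal{M}\to\mathcal{N}(\vec{\epsilon})$, the left-hand part of the displayed chain holds, i.e. $\max\{\max_{i=1,2}\{c_i-a_i\},\max_{i=1,2}\{d_i-b_i\}\}\le\epsilon$; in particular $c_i-a_i\le\epsilon$ and $d_i-b_i\le\epsilon$ for $i=1,2$. Then I would apply the same corollary to $g$, which just interchanges the roles of $\mathcal{M}$ and $\mathcal{N}$ — hence of $(a,b)$ and $(c,d)$ — and gives $a_i-c_i\le\epsilon$ and $b_i-d_i\le\epsilon$ for $i=1,2$.

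Combining the four bounds coordinatewise, for each $i=1,2$ we obtain $|c_i-a_i|=\max\{c_i-a_i,\,a_i-c_i\}\le\epsilon$ and $|d_i-b_i|=\max\{d_i-b_i,\,b_i-d_i\}\le\epsilon$; taking the maximum over $i$ then yields $\|c-a\|_\infty\le\epsilon$ and $\|d-b\|_\infty\le\epsilon$, and therefore $\epsilon\ge\max\{\|c-a\|_\infty,\|d-b\|_\infty\}$, as claimed.

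The one place that deserves attention is the bookkeeping at $\pm\infty$, since $\|c-a\|_\infty$ and the differences $c_i-a_i$ live in $\overline{\mathbb{R}}$. Here the conventions~\eqref{conventionslist3} are on our side: once both $f$ and $g$ are non-trivial, the inequalities $c_i-\epsilon\le a_i$ and $a_i-\epsilon\le c_i$ (with $\epsilon$ a finite nonnegative real) force $a_i=-\infty\iff c_i=-\infty$, and likewise $b_i=+\infty\iff d_i=+\infty$; in a coordinate where both endpoints are infinite the relevant difference is $0$ by convention, so $|c_i-a_i|\le\epsilon$ (resp. $|d_i-b_i|\le\epsilon$) still holds, and in a coordinate where the difference would be $-\infty$ the bound is trivial. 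So I do not expect a real obstacle — the content is entirely in Corollary~\ref{cor-nontrivialmorphism}, and the rest is a short computation.
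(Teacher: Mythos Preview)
Your proposal is correct and follows essentially the same approach as the paper: apply Corollary~\ref{cor-nontrivialmorphism} to the non-trivial $f$ to get $\epsilon\ge\max\{\max_i\{c_i-a_i\},\max_i\{d_i-b_i\}\}$, apply it again to the non-trivial $g$ with the roles of $(a,b)$ and $(c,d)$ swapped, and combine. Your extra paragraph on the $\pm\infty$ bookkeeping is a welcome addition that the paper leaves implicit.
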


\begin{proof}
From Corollary~\ref{cor-nontrivialmorphism}, since $f \colon \mathcal{M} \rightarrow \mathcal{N}(\vec{\epsilon})$ is a non-trivial morphism, we get
\[\displaystyle \epsilon \geq \max \Big\{ \max_{i=1, 2} \{c_i-a_i \}, \max_{i=1, 2} \{ d_i-b_i \} \Big\}.
\]
Analogously, since $g \colon \mathcal{N} \rightarrow \mathcal{M}(\vec{\epsilon})$ is a non-trivial morphism, we have
\[\displaystyle \epsilon \geq \max \Big\{ \max_{i=1, 2} \{a_i-c_i \}, \max_{i=1, 2} \{ b_i-d_i \} \Big\}.
\]
Hence, we can conclude that
\[
\epsilon \geq \max \Big\{ \| c-a \|_{\infty}, \| d-b \|_{\infty} \Big\} . 
\]
\end{proof}

We are ready to provide our first main result, which is as follows:

\begin{thma} \label{interleavingdistanceisequaltostarvalue}
Let $\mathcal{M}$ and $\mathcal{N}$ be two rectangle persistence modules. We have
\small \[
d_I(\mathcal{M}, \mathcal{N}) = \min \Bigg\{ \max \Big\{ \min_{i=1, 2} \frac{b_i-a_i}{2}, \min_{i=1, 2} \frac{d_i-c_i}{2} \Big\}, \max \Big\{ \| c-a \|_{\infty}, \| d-b \|_{\infty} \Big\}    \Bigg\}.
\]   
\end{thma}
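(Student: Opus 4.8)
The plan is to combine the upper bound already proved in Theorem~\ref{computationofinterleavingdistance} with a matching lower bound, so that the two inequalities together force the equality. Write $\mu$ for the right-hand side of the claimed identity, i.e.
\[
\mu \doteq \min \Bigg\{ \max \Big\{ \min_{i=1, 2} \frac{b_i-a_i}{2}, \min_{i=1, 2} \frac{d_i-c_i}{2} \Big\}, \max \Big\{ \| c-a \|_{\infty}, \| d-b \|_{\infty} \Big\}    \Bigg\}.
\]
Theorem~\ref{computationofinterleavingdistance} already gives $d_I(\mathcal{M}, \mathcal{N}) \leq \mu$, so the only thing left is to show $d_I(\mathcal{M}, \mathcal{N}) \geq \mu$.

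For the lower bound I would argue straight from the definition of the interleaving distance. Fix any $\epsilon \in [0, +\infty)$ for which $\mathcal{M}$ and $\mathcal{N}$ are $\epsilon$-interleaved, and let $f \colon \mathcal{M} \rightarrow \mathcal{N}(\vec{\epsilon})$ and $g \colon \mathcal{N} \rightarrow \mathcal{M}(\vec{\epsilon})$ be a pair of morphisms realizing the interleaving. The crucial point is a (manifestly exhaustive) dichotomy on this pair: either at least one of $f$, $g$ is the trivial morphism, or both $f$ and $g$ are non-trivial. In the first case Lemma~\ref{forgtrivialmorphism} applies and gives
\[
\epsilon \geq \max \Big\{ \min_{i=1, 2} \frac{b_i-a_i}{2}, \min_{i=1, 2} \frac{d_i-c_i}{2} \Big\} \geq \mu .
\]
In the second case Lemma~\ref{fandgaretrivialmorphisms} applies and gives
\[
\epsilon \geq \max \Big\{ \| c-a \|_{\infty}, \| d-b \|_{\infty} \Big\} \geq \mu .
\]
Hence in both cases $\epsilon \geq \mu$. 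Since this holds for every $\epsilon$ for which an $\epsilon$-interleaving exists, the set of such $\epsilon$ is contained in $[\mu, +\infty)$, so its infimum is at least $\mu$; and if no finite interleaving exists then $d_I(\mathcal{M}, \mathcal{N}) = +\infty \geq \mu$. In all cases $d_I(\mathcal{M}, \mathcal{N}) \geq \mu$, and together with Theorem~\ref{computationofinterleavingdistance} this yields $d_I(\mathcal{M}, \mathcal{N}) = \mu$.

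I do not anticipate a real obstacle at this stage: the two regime-specific lower bounds — one for interleavings that factor through the zero module (trivial morphisms) and one for interleavings realized by genuine identity maps (non-trivial morphisms) — have already been isolated in Lemmas~\ref{forgtrivialmorphism} and~\ref{fandgaretrivialmorphisms}, and Lemma~\ref{maxinequality} together with Corollary~\ref{cor-nontrivialmorphism} has already done the work of reconciling the two quantities appearing under the outer minimum. The only care needed is bookkeeping: checking that the dichotomy on $(f,g)$ is complete (it is, since "not both non-trivial" is exactly "at least one trivial"), and noting that the degenerate cases in which one or both of the inner quantities equal $+\infty$ cause no trouble, because the argument only ever compares $\epsilon$ with these quantities rather than performing arithmetic on them.
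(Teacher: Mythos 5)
Your proposal is correct and follows essentially the same route as the paper: the upper bound from Theorem~\ref{computationofinterleavingdistance}, then a lower bound via the dichotomy on whether the interleaving morphisms $f,g$ are trivial, invoking Lemma~\ref{forgtrivialmorphism} in one case and Lemma~\ref{fandgaretrivialmorphisms} in the other. Your explicit remark that the infimum over admissible $\epsilon$ is therefore at least $\mu$ is a slightly more careful phrasing of the paper's concluding step, but the argument is the same.
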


\begin{proof}

First, note that by Theorem~\ref{computationofinterleavingdistance}, we already have 
\small \[
d_I(\mathcal{M}, \mathcal{N}) \leq \min \Bigg\{ \max \Big\{ \min_{i=1, 2} \frac{b_i-a_i}{2}, \min_{i=1, 2} \frac{d_i-c_i}{2} \Big\}, \max \Big\{ \| c-a \|_{\infty}, \| d-b \|_{\infty} \Big\}    \Bigg\}.
\]   
For the remaining part of the proof, let  $\mathcal{M}$ and $\mathcal{N}$ be two $\epsilon$-interleaved persistence modules with interleaving morphisms $f \colon \mathcal{M} \rightarrow \mathcal{N}(\vec{\epsilon})$ and $g \colon \mathcal{N} \rightarrow \mathcal{M}(\vec{\epsilon})$. Suppose first that $f$ or $g$ is a trivial morphism. By Lemma~\ref{forgtrivialmorphism}, we know that
\[
\epsilon \geq \max \Big\{ \min_{i=1, 2} \frac{b_i-a_i}{2}, \min_{i=1, 2} \frac{d_i-c_i}{2} \Big\}. 
\] 
Thus, we have
\[
\epsilon \geq \min \Bigg\{ \max \Big\{ \min_{i=1, 2} \frac{b_i-a_i}{2}, \min_{i=1, 2} \frac{d_i-c_i}{2} \Big\}, \max \Big\{ \| c-a \|_{\infty}, \| d-b \|_{\infty} \Big\}    \Bigg\}. 
\]
Now, suppose that $f$ and $g$ are non-trivial morphisms. By Lemma~\ref{fandgaretrivialmorphisms}, we know that 
\[
\epsilon \geq \max \Big\{ \| c-a \|_{\infty}, \| d-b \|_{\infty} \Big\} .
\]
Thus, again, we have
\[
\epsilon \geq \min \Bigg\{ \max \Big\{ \min_{i=1, 2} \frac{b_i-a_i}{2}, \min_{i=1, 2} \frac{d_i-c_i}{2} \Big\}, \max \Big\{ \| c-a \|_{\infty}, \| d-b \|_{\infty} \Big\}    \Bigg\}. 
\]
Therefore in both cases, we have
\[
\epsilon \geq \min \Bigg\{ \max \Big\{ \min_{i=1, 2} \frac{b_i-a_i}{2}, \min_{i=1, 2} \frac{d_i-c_i}{2} \Big\}, \max \Big\{ \| c-a \|_{\infty}, \| d-b \|_{\infty} \Big\}    \Bigg\}.
\]
Hence, we can immediately conclude that 
\small \[
d_I(\mathcal{M}, \mathcal{N}) \geq \min \Bigg\{ \max \Big\{ \min_{i=1, 2} \frac{b_i-a_i}{2}, \min_{i=1, 2} \frac{d_i-c_i}{2} \Big\}, \max \Big\{ \| c-a \|_{\infty}, \| d-b \|_{\infty} \Big\}    \Bigg\},
\]
which finishes the proof.
\end{proof}

\begin{remark}
Thanks to  Proposition~\ref{closureequality} and Corollary~\ref{interiorequality}, Theorem A holds independently of whether the underlying rectangles are open, closed or neither.
\end{remark}

We can generalize Theorem A to any $n$-paramater rectangle persistence modules by imitating the previous results.

\begin{corollary}\label{interleavingfor$n$-parameter}
    Let $\mathcal{M}$ and $\mathcal{N}$ be $n$-parameter rectangle persistence modules with underlying rectangles $R_\mathcal{M}=(a_1, b_1) \times \ldots \times (a_n, b_n)$ and $R_\mathcal{N}=(c_1, d_1) \times  \ldots \times (c_n, d_n)$ where $a=(a_1, \ldots, a_n)$, $b=(b_1, \ldots, b_n)$, $c=(c_1, \ldots, c_n)$ and $ d=(d_1, \ldots, d_n)$, respectively. Then,
\small \[
d_I(\mathcal{M}, \mathcal{N}) = \min \Bigg\{ \max \Big\{ \min_{1 \leq i \leq n} \frac{b_i-a_i}{2}, \min_{1 \leq i \leq n} \frac{d_i-c_i}{2} \Big\}, \max \Big\{ \| c-a \|_{\infty}, \| d-b \|_{\infty} \Big\}  \Bigg\}.
\]  
\end{corollary}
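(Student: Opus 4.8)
The plan is to notice that every lemma and proposition feeding into the proof of Theorem~\ref{interleavingdistanceisequaltostarvalue} rests on an argument that is insensitive to the number of parameters being exactly two, so the $n$-parameter formula follows by rerunning the same chain of results with every index range $i=1,2$ replaced by $1\le i\le n$. The place that uses the planar structure most visibly is Lemma~\ref{non-trivialmorphism}, so I would first re-prove its $n$-parameter analogue: a non-trivial morphism $f\colon\mathcal{M}\to\mathcal{N}$ exists if and only if $c\preceq a\prec d\preceq b$ in $\mathbb{R}^n$, and when it does, the morphism that is $\id_\mathbb{k}$ on $R_\mathcal{M}\cap R_\mathcal{N}$ works. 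The proof is the same point-chase: if $f_u\neq 0$ then $a\prec u\prec b$ and $c\prec u\prec d$; if $c\npreceq a$, fix a coordinate $\ell$ with $c_\ell>a_\ell$ and, in the case $u_\ell\ge c_\ell$, pick $a_\ell^+\in\mathbb{R}$ with $a_\ell<a_\ell^+\le c_\ell\le u_\ell$ together with $a_j^+\in\mathbb{R}$, $a_j<a_j^+\le u_j$, for $j\neq\ell$ (possible since $u\in R_\mathcal{M}$ forces $a_j<u_j$ in every coordinate); then $a^+\in R_\mathcal{M}\setminus R_\mathcal{N}$ with $a\prec a^+\preceq u$, and the commuting square through $\mathcal{M}_{a^+}\xrightarrow{\id}\mathcal{M}_u$, $\mathcal{N}_{a^+}=0$, forces $f_u=0$; the statement $d\preceq b$ is mirror-symmetric, and the converse is witnessed by the identity-on-the-overlap morphism exactly as before. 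With this in hand, Corollary~\ref{cor-nontrivialmorphism} generalizes verbatim.

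Next I would re-establish the upper bound. Propositions~\ref{epsilontrivialrectangle} and \ref{2epsilontrivialiffepsiloninterleaved} are already stated for arbitrary $n$, so Lemma~\ref{lem:interleaving-trival-mrph} carries over: with $f,g$ trivial and $\epsilon=\max\{\min_i(b_i-a_i)/2,\min_i(d_i-c_i)/2\}$ both $\mathcal{M}$ and $\mathcal{N}$ become $2\epsilon$-trivial, so all interleaving diagrams commute. Lemma~\ref{maxinequality} and Lemma~\ref{lem:interleaving-nontrival-mrph} are inequality- and diagram-manipulations whose coordinate indices simply range over $1,\dots,n$; in particular the key comparisons $u+\vec{\epsilon}\prec b-\vec{\epsilon}\preceq b-\|d-b\|_\infty\cdot(1,\dots,1)\preceq d$ and its mirror use only $\|d-b\|_\infty,\|c-a\|_\infty\le\epsilon$, so they go through unchanged. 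Combining these two lemmas as in Theorem~\ref{computationofinterleavingdistance} gives the ``$\le$'' half of the equality.

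For the reverse inequality I would copy the proof of Theorem~\ref{interleavingdistanceisequaltostarvalue}: given an $\epsilon$-interleaving $(f,g)$, the $n$-parameter version of Lemma~\ref{forgtrivialmorphism} (whose proof only invokes Proposition~\ref{epsilontrivialrectangle} and the triangle identities) handles the case where $f$ or $g$ is trivial, yielding $\epsilon\ge\max\{\min_i(b_i-a_i)/2,\min_i(d_i-c_i)/2\}$, and the $n$-parameter version of Lemma~\ref{fandgaretrivialmorphisms} (via the generalized Corollary~\ref{cor-nontrivialmorphism}) handles the case where both are non-trivial, yielding $\epsilon\ge\max\{\|c-a\|_\infty,\|d-b\|_\infty\}$; in either case $\epsilon$ dominates the claimed minimum, hence so does $d_I(\mathcal{M},\mathcal{N})$. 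The only genuinely dimension-sensitive step, and hence the main (though mild) obstacle, is the point-construction in the $n$-parameter Lemma~\ref{non-trivialmorphism} sketched above: one must be able to simultaneously push the single bad coordinate to $c_\ell$ (or $d_\ell$) and hold every other coordinate strictly between $a_j$ and $u_j$ (respectively between $u_j$ and $d_j$) while remaining comparable to $u$, which is exactly what membership in $R_\mathcal{M}$ (resp.\ $R_\mathcal{N}$) guarantees. Finally, as the paper already records for Theorem A, Proposition~\ref{closureequality} and Corollary~\ref{interiorequality} make the $n$-parameter formula hold whether the underlying rectangles are open, closed, or neither.
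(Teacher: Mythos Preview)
Your proposal is correct and is exactly the approach the paper takes: the paper states the corollary with no proof beyond the sentence ``We can generalize Theorem A to any $n$-parameter rectangle persistence modules by imitating the previous results,'' and you have simply spelled out that imitation in detail, correctly identifying the $n$-parameter point-construction in Lemma~\ref{non-trivialmorphism} as the one place requiring any care.
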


\section{Computing the Bottleneck Distance for Rectangle Decomposable Persistence Modules} \label{bottleneck distance for rectangle}
In this section, we first give an equivalent expression of Definition~\ref{matching} for the calculation of the bottleneck distance for interval decomposable persistence modules.  Then, we present our result for the calculation of the bottleneck distance for rectangle decomposable persistence modules.

\begin{proposition} \label{bot.dis.2}
Let $\mathcal{M}$ and $\mathcal{N}$ be two finitely presented interval decomposable persistence modules with given decompositions
$$
\displaystyle \mathcal{M}=\bigoplus_{I \in B(\mathcal{M})} \mathcal{I}^{I}   \ \textrm{and} \ \displaystyle \mathcal{N}=\bigoplus_{J \in B(\mathcal{N})} \mathcal{I}^{J}
$$ 
where summands $\mathcal{I}^{I}$ and $\mathcal{I}^{J}$ are interval persistence modules with underlying intervals $I$ and $J$, respectively. Let $S$ be the set of all partial multibijections between the barcodes $B(\mathcal{M})$ and $B(\mathcal{N})$. Let $\sigma \in S$ and let $\mathbf{I}'(\sigma)=B(\mathcal{M})- \coim \sigma$, $\mathbf{J}'(\sigma)=B(\mathcal{N})- \im \sigma$.  Then, the bottleneck distance $d_B(\mathcal{M}, \mathcal{N})$ is equal to	

\begin{equation} \label{computationofbottleneckequation}
\min_{\sigma \in S}  \max \Big\{  \max_{I \in \coim \sigma} \big\{ d_I(\mathcal{I}^{I}, \mathcal{I}^{\sigma(I)}) \big\}, \max_{I \in \mathbf{I}'(\sigma)} \big\{ d_I(\mathcal{I}^{I}, 0) \big\}, \max_{J \in \mathbf{J}'(\sigma)} \big\{ d_I(0, \mathcal{I}^{J}) \big\} \Big\}.  
\end{equation}   

\end{proposition}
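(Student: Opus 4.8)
The plan is to show that the quantity in~\eqref{computationofbottleneckequation} equals $d_B(\mathcal{M},\mathcal{N})$ by unwinding Definition~\ref{matching} and Definition~\ref{bot.dis.1}, using the fact that for finitely presented modules the infimum defining $d_B$ is attained by an optimal $\sigma$, and that each condition in an $\epsilon$-matching is a ``$\leq\epsilon$'' statement about some interleaving-type distance. Throughout I will use two identifications: first, $\mathcal{I}^I$ is $2\epsilon$-trivial if and only if $d_I(\mathcal{I}^I,0)\leq\epsilon$, which is exactly Proposition~\ref{2epsilontrivialiffepsiloninterleaved} (together with the closure Theorem~\ref{closure}, so that the infimum in $d_I(\mathcal{I}^I,0)$ is attained and ``$2\epsilon$-trivial'' and ``$d_I\le\epsilon$'' coincide rather than differing by a boundary case); second, $\mathcal{I}^I$ and $\mathcal{I}^{\sigma(I)}$ are $\epsilon$-interleaved if and only if $d_I(\mathcal{I}^I,\mathcal{I}^{\sigma(I)})\leq\epsilon$, again by Theorem~\ref{closure}. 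So the three bullet conditions (i)--(iii) in Definition~\ref{matching} say precisely that $\epsilon$ is $\geq$ the three ``max'' terms appearing in~\eqref{computationofbottleneckequation} for that fixed $\sigma$.

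Concretely, I would argue both inequalities. For ``$\le$'': fix any $\sigma\in S$ and let $E(\sigma)$ denote the inner maximum in~\eqref{computationofbottleneckequation}. If $E(\sigma)<+\infty$, then setting $\epsilon=E(\sigma)$, every term is $\le\epsilon$, hence by the two identifications above $\sigma$ is an $\epsilon$-matching, so $d_B(\mathcal{M},\mathcal{N})\le\epsilon=E(\sigma)$; taking the minimum over $\sigma\in S$ gives $d_B(\mathcal{M},\mathcal{N})\le\min_{\sigma}E(\sigma)$. (If $E(\sigma)=+\infty$ for a given $\sigma$ the inequality is vacuous for that $\sigma$, and one still gets the bound from any $\sigma$ with $E(\sigma)<+\infty$; such a $\sigma$ exists because $\mathcal{M},\mathcal{N}$ are finitely presented, e.g. from the optimal matching, or the statement is $+\infty=+\infty$.) For ``$\ge$'': since $\mathcal{M},\mathcal{N}$ are finitely presented, by the remark following Definition~\ref{bot.dis.1} there is an optimal $\overline{\sigma}$ with $d_B(\mathcal{M},\mathcal{N})=\cost(\overline{\sigma})$, and $\overline{\sigma}$ is a $d_B(\mathcal{M},\mathcal{N})$-matching. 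Unwinding the three matching conditions for $\overline\sigma$ with $\epsilon=d_B(\mathcal M,\mathcal N)$ gives $d_I(\mathcal I^I,\mathcal I^{\overline\sigma(I)})\le\epsilon$ for $I\in\coim\overline\sigma$, and $d_I(\mathcal I^I,0)\le\epsilon$, $d_I(0,\mathcal I^J)\le\epsilon$ for the unmatched intervals; hence $E(\overline\sigma)\le\epsilon=d_B(\mathcal M,\mathcal N)$, so $\min_{\sigma}E(\sigma)\le d_B(\mathcal M,\mathcal N)$. Combining the two inequalities yields the claimed equality.

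The point deserving the most care is the passage between the \emph{strict} infimum in Definition~\ref{bot.dis.1} (and in the definition of $d_I$) and the \emph{attained} minimum in~\eqref{computationofbottleneckequation}: a priori ``$\mathcal{M}$ and $\mathcal{N}$ are $\epsilon$-matched'' is an open-type condition, while ``$d_I(\mathcal I^I,\mathcal I^{\sigma(I)})\le\epsilon$'' is closed. This is exactly why the finite-presentation hypothesis is imposed: the closure theorem (Theorem~\ref{closure}) guarantees that if $d_I=\epsilon$ then the modules are genuinely $\epsilon$-interleaved, so the two formulations agree, and that an optimal $\overline\sigma$ realizing the bottleneck cost exists. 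I would also note that $S$ is the \emph{same} index set on both sides — the set of all partial multibijections of barcodes — so no reindexing subtlety arises; the only content is the term-by-term translation of conditions (i)--(iii) of Definition~\ref{matching} into the three interleaving-distance quantities, which is where Proposition~\ref{2epsilontrivialiffepsiloninterleaved} does the work for the unmatched bars and Theorem~\ref{closure} does it for the matched ones.
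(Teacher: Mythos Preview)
Your proof is correct and follows essentially the same two-inequality argument as the paper, translating the conditions of Definition~\ref{matching} into interleaving-distance bounds via Proposition~\ref{2epsilontrivialiffepsiloninterleaved}. The only minor differences are cosmetic: for the bound $\min_\sigma E(\sigma)\le d_B$ you invoke the optimal $\overline\sigma$ directly whereas the paper takes an arbitrary $\epsilon$-matching and then lets $\epsilon$ range, and you make the role of Theorem~\ref{closure} at the boundary explicit where the paper leaves it tacit.
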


\begin{proof}
Since $\mathcal{M}$ and $\mathcal{N}$ are finitely presented interval decomposable persistence modules, $S$ is finite as well as $B(\mathcal{M})$ and $B(\mathcal{N})$. If $d_B(\mathcal{M}, \mathcal{N})=+\infty$, then the quantity in (\ref{computationofbottleneckequation}) is less than or equal to $d_B(\mathcal{M}, \mathcal{N})$. If $d_B(\mathcal{M}, \mathcal{N})< +\infty$, consider a partial multibijection $\Bar{\sigma} \colon B(\mathcal{M}) \nrightarrow B(\mathcal{N}) $ between the barcodes $B(\mathcal{M})$ and $B(\mathcal{N})$ such that $\Bar{\sigma}$ is an $\epsilon$-matching. Therefore,

\begin{itemize} 
\item [(i)]  for all $I \in B(\mathcal{M}) - \coim \Bar{\sigma}$, $\mathcal{I}^{I}$ is $2\epsilon$-trivial,
\item [(ii)] for all $J \in B(\mathcal{N}) - \im \Bar{\sigma}$, $\mathcal{I}^{J}$ is $2\epsilon$-trivial,
\item [(iii)] for all $I \in \coim  \Bar{\sigma}$, $\mathcal{I}^{I}$ and $\mathcal{I}^{\Bar{\sigma}(I)}$ are $\epsilon$-interleaved.   
\end{itemize}

Now, by Proposition~\ref{2epsilontrivialiffepsiloninterleaved}, the interval persistence module $\mathcal{I}^{I}$ is $2\epsilon$-trivial if and only if $\mathcal{I}^{I}$ is $\epsilon$-interleaved with the zero persistence module for all $I \in B(\mathcal{M}) - \coim \Bar{\sigma}$. Thus, $\epsilon \geq d_I(\mathcal{I}^{I}, 0)$ for all $I \in B(\mathcal{M}) - \coim \Bar{\sigma}$.

Similarly, by Proposition~\ref{2epsilontrivialiffepsiloninterleaved}, the interval persistence module $\mathcal{I}^{J}$ is $2\epsilon$-trivial if and only if $\mathcal{I}^{J}$ is $\epsilon$-interleaved with the zero persistence module for all $J \in B(\mathcal{N})- \im \Bar{\sigma}$. Thus, $\epsilon \geq d_I(0, \mathcal{I}^{J})$ for all $J \in B(\mathcal{N}) - \im \Bar{\sigma}$.

Also,  $\mathcal{I}^{I}$ and $\mathcal{I}^{\Bar{\sigma}(I)}$ are $\epsilon$-interleaved for all $I \in \coim  \Bar{\sigma}$. Thus, $\epsilon \geq d_I(\mathcal{I}^{I}, \mathcal{I}^{\Bar{\sigma}(I)})$ for all $I \in \coim  \Bar{\sigma}$.

Therefore, 
\[
\displaystyle \epsilon \geq \max \big \{  \max_{I \in \coim \Bar{\sigma}} \{d_I(\mathcal{I}^{I}, \mathcal{I}^{\Bar{\sigma}(I)})\}, \max_{I \in \mathbf{I}'(\sigma)} \{ d_I(\mathcal{I}^{I}, 0)\}, \max_{J \in \mathbf{J}'(\sigma)} \{d_I(0, \mathcal{I}^{J})\} \big \}.
\]
It follows that 
\[
\displaystyle \epsilon \geq \min_{\sigma \in S} \big ( \max \big \{  \max_{I \in \coim \sigma} \{d_I(\mathcal{I}^{I}, \mathcal{I}^{\sigma(I)})\}, \max_{I \in \mathbf{I}'(\sigma)} \{ d_I(\mathcal{I}^{I}, 0)\}, \max_{J \in \mathbf{J}'(\sigma)} \{d_I(0, \mathcal{I}^{J})\} \big \}  \big ).
\]

By the arbitrariness of $\epsilon$, we deduce that
\[
\displaystyle d_B(\mathcal{M}, \mathcal{N}) \geq \min_{\sigma \in S}\max \Big\{  \max_{I \in \coim \sigma} \big\{ d_I(\mathcal{I}^{I}, \mathcal{I}^{\sigma(I)}) \big\}, \max_{I \in \mathbf{I}'(\sigma)} \big\{ d_I(\mathcal{I}^{I}, 0) \big\}, \max_{J \in \mathbf{J}'(\sigma)} \big\{ d_I(0, \mathcal{I}^{J}) \big\} \Big\}.
\]

Conversely, let us set
$$\displaystyle \epsilon=\min_{\sigma \in S} \max \Big\{  \max_{I \in \coim \sigma} \big\{ d_I(\mathcal{I}^{I}, \mathcal{I}^{\sigma(I)}) \big\}, \max_{I \in \mathbf{I}'(\sigma)} \big\{ d_I(\mathcal{I}^{I}, 0) \big\}, \max_{J \in \mathbf{J}'(\sigma)} \big\{ d_I(0, \mathcal{I}^{J}) \big\} \Big\}.$$ 
Let $\Bar{\sigma}$ be a partial multibijection such that 
$$\displaystyle \epsilon=\max \big \{  \max_{I \in \coim \Bar{\sigma}} \{d_I(\mathcal{I}^{I}, \mathcal{I}^{\Bar{\sigma}(I)})\}, \max_{I \in \mathbf{I}'(\sigma)} \{ d_I(\mathcal{I}^{I}, 0)\}, \max_{J \in \mathbf{J}'(\sigma)} \{d_I(0, \mathcal{I}^{J})\} \big \}.$$ It follows that $\displaystyle \epsilon \geq d_I(\mathcal{I}^{I}, \mathcal{I}^{\Bar{\sigma}(I)})$ for all $I \in \coim  \Bar{\sigma}$. Similarly, $\epsilon \geq d_I(\mathcal{I}^{I}, 0)$ for all $I \in  \mathbf{I}'(\sigma)= B(\mathcal{M}) - \coim \Bar{\sigma}$ and $\epsilon \geq d_I(0, \mathcal{I}^{J})$ for all $J \in \mathbf{J}'(\sigma)= B(\mathcal{N}) - \im \Bar{\sigma}$. Clearly, $\mathcal{I}^{I}$ is $\epsilon$-interleaved with the zero persistence module for each $I \in  B(\mathcal{M}) - \coim \Bar{\sigma}$ and $\mathcal{I}^{J}$ is $\epsilon$-interleaved with the zero persistence module for each $J \in  B(\mathcal{N}) - \im \Bar{\sigma}$. So, by Proposition~\ref{2epsilontrivialiffepsiloninterleaved}, $\mathcal{I}^{I}$ is $2\epsilon$-trivial for each $I \in  B(\mathcal{M}) - \coim \Bar{\sigma}$ and $\mathcal{I}^{J}$ is $2\epsilon$-trivial for each $J \in  B(\mathcal{N}) - \im \Bar{\sigma}$. So we have
\begin{itemize} 
\item [(i)]  for all $I \in B(\mathcal{M}) - \coim \Bar{\sigma}$, $\mathcal{I}^{I}$ is $2\epsilon$-trivial,
\item [(ii)] for all $J \in B(\mathcal{N}) - \im \Bar{\sigma}$, $\mathcal{I}^{J}$ is $2\epsilon$-trivial,
\item [(iii)] for all $I \in \coim  \Bar{\sigma}$, $\mathcal{I}^{I}$ and $\mathcal{I}^{\Bar{\sigma}(I)}$ are $\epsilon$-interleaved.   
\end{itemize}

Thus, $\Bar{\sigma}$ is an $\epsilon$-matching. So, by Definition~\ref{bot.dis.1}, $\displaystyle \epsilon \geq d_B(\mathcal{M}, \mathcal{N})$.
Hence, the claim is proved.
\end{proof}

The following result shows that the bottleneck distance is equal to the interleaving distance if $\mathcal{M}$ and $\mathcal{N}$ are two $n$-parameter interval persistence modules (for more details of the proof, we refer to the reader to \cite[Proposition 4.3.1]{Batan}).

\begin{corollary} \label{indecomposableequality}
If $\mathcal{M}=\mathcal{I}^I$ and $\mathcal{N}=\mathcal{I}^J$ are two $n$-parameter interval persistence modules, then $d_B(\mathcal{M}, \mathcal{N}) = d_I(\mathcal{M}, \mathcal{N})$.
\end{corollary}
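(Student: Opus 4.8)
The plan is to exploit that both $\mathcal{M}=\mathcal{I}^I$ and $\mathcal{N}=\mathcal{I}^J$ are indecomposable, so their barcodes $B(\mathcal{M})=\{I\}$ and $B(\mathcal{N})=\{J\}$ are singletons and the set $S$ of partial multibijections between them has exactly two elements: the empty multibijection $\sigma_\emptyset$ (with $\coim\sigma_\emptyset=\im\sigma_\emptyset=\emptyset$) and the full one $\sigma_{\mathrm f}$ with $\sigma_{\mathrm f}(I)=J$. I would then read off $d_B$ directly from Definition~\ref{bot.dis.1} by determining, for each of these two multibijections, the set of $\epsilon$ for which it is an $\epsilon$-matching in the sense of Definition~\ref{matching}. (Note that this avoids Proposition~\ref{bot.dis.2}, which assumed finite presentability, not imposed here.)

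For $\sigma_{\mathrm f}$, conditions (i) and (ii) of Definition~\ref{matching} are vacuous, so $\sigma_{\mathrm f}$ is an $\epsilon$-matching precisely when $\mathcal{I}^I$ and $\mathcal{I}^J$ are $\epsilon$-interleaved, i.e. when $\mathcal{M}$ and $\mathcal{N}$ are $\epsilon$-interleaved. For $\sigma_\emptyset$, condition (iii) is vacuous, while (i) and (ii) say exactly that $\mathcal{I}^I$ and $\mathcal{I}^J$ are both $2\epsilon$-trivial. Hence $\mathcal{M}$ and $\mathcal{N}$ are $\epsilon$-matched if and only if either they are $\epsilon$-interleaved, or both $\mathcal{M}$ and $\mathcal{N}$ are $2\epsilon$-trivial. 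The key remaining step is to absorb the second alternative into the first: if $\mathcal{M}$ and $\mathcal{N}$ are both $2\epsilon$-trivial, then they are $\epsilon$-interleaved. This is proved exactly as the degenerate case treated in Lemma~\ref{lem:interleaving-trival-mrph} and Proposition~\ref{2epsilontrivialiffepsiloninterleaved}: take $f\colon\mathcal{M}\to\mathcal{N}(\vec{\epsilon})$ and $g\colon\mathcal{N}\to\mathcal{M}(\vec{\epsilon})$ to be the zero morphisms; the square diagrams of Definition~\ref{interleaving} commute since both composites vanish, and the two triangle diagrams commute because their long edges $\varphi_\mathcal{M}(u,u+2\vec{\epsilon})$ and $\varphi_\mathcal{N}(u,u+2\vec{\epsilon})$ are zero by $2\epsilon$-triviality.

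Combining these observations, the set of $\epsilon$ for which $\mathcal{M}$ and $\mathcal{N}$ are $\epsilon$-matched coincides with the set of $\epsilon$ for which they are $\epsilon$-interleaved, and taking infima (with the usual $+\infty$ convention when neither set is nonempty) yields $d_B(\mathcal{M},\mathcal{N})=d_I(\mathcal{M},\mathcal{N})$. Alternatively, one can stop after noting $d_B(\mathcal{M},\mathcal{N})\le\cost(\sigma_{\mathrm f})=d_I(\mathcal{M},\mathcal{N})$ and invoke the general inequality $d_B\ge d_I$ for interval decomposable modules from \cite{Bjerkevik}. I do not anticipate any real obstacle; the only point requiring care is the small lemma that two $2\epsilon$-trivial modules are $\epsilon$-interleaved, i.e. that the empty matching can never beat the identity matching between two singleton barcodes, which is precisely the ``zero morphisms suffice'' phenomenon already used in the proofs above.
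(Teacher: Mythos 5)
Your proposal is correct, and its closing ``alternative'' is precisely the paper's proof: the authors note the general inequality $d_I(\mathcal{M},\mathcal{N})\le d_B(\mathcal{M},\mathcal{N})$ and then observe that the full bijection $\sigma\colon\{I\}\to\{J\}$ is an $\epsilon$-matching whenever $\mathcal{M}$ and $\mathcal{N}$ are $\epsilon$-interleaved, since conditions (i) and (ii) of Definition~\ref{matching} are vacuous; this gives $d_B\le d_I$. Your main line of argument is a slightly more self-contained variant of the same idea: instead of citing the known inequality $d_I\le d_B$, you enumerate the only two partial multibijections between the singleton barcodes and show that the empty matching can never beat the full one, because two $2\epsilon$-trivial modules are $\epsilon$-interleaved via zero morphisms (the same ``zero morphisms suffice'' mechanism as in Proposition~\ref{2epsilontrivialiffepsiloninterleaved} and Lemma~\ref{lem:interleaving-trival-mrph}). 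This buys you independence from the external stability result and from the finite-presentability hypothesis of Proposition~\ref{bot.dis.2}, at the cost of one extra (easy) lemma; the paper's route is shorter but leans on the cited inequality. Both arguments are sound.
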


\begin{proof}
It is well known that $d_I(\mathcal{M}, \mathcal{N})\leq d_B(\mathcal{M}, \mathcal{N})$. Thus, it is enough to show that $d_B(\mathcal{M}, \mathcal{N}) \leq d_I(\mathcal{M}, \mathcal{N})$. Suppose that the interval persistence modules $\mathcal{M}=\mathcal{I}^I$ and $\mathcal{N}=\mathcal{I}^J$ are $\epsilon$-interleaved. Now, consider the bijection $\sigma \colon \{I\} \rightarrow \{J\} $ with $\im \sigma = \{J\}$ and $\coim \sigma = \{I\}$. Therefore, $B(\mathcal{M}) - \coim \sigma = \emptyset$ and $B(\mathcal{N}) - \im \sigma = \emptyset$. Now, by Definition~\ref{matching}, the bijection $\sigma \colon \{I\} \rightarrow \{J\} $ is an $\epsilon$-matching since $\mathcal{I}^I$ and $\mathcal{I}^J$ are $\epsilon$-interleaved. In particular, we have $d_B(\mathcal{M}, \mathcal{N})\leq d_I(\mathcal{M}, \mathcal{N})$.

\end{proof}

Now, we are ready to give an exact computation of the bottleneck distance between $n$-parameter rectangle decomposable persistence modules. For ease of notation, we give the exact computation for $n=2$. 

Let $\mathcal{M}$ and $\mathcal{N}$ be two rectangle decomposable persistence modules with decompositions 
$$
\displaystyle \mathcal{M}=\bigoplus_{i=1}^{m}\mathcal{M}_i ~\textrm{and}~ \displaystyle \mathcal{N}=\bigoplus_{j=1}^{k}\mathcal{N}_j.$$  
Let us denote the underlying rectangle of the persistence module $\mathcal{M}_i$ by $R_i=(a_1^i, b_1^i)\times(a_2^i, b_2^i)$
and $\mathcal{N}_j$ by $Q_j=(c_1^j, d_1^j)\times(c_2^j, d_2^j)$
where $a^i=(a_1^i, a_2^i)$, $b^i=(b_1^i, b_2^i)$, $c^j=(c_1^j, c_2^j)$ and $d^j=(d_1^j, d_2^j)$.

\begin{thmb}\label{bottleneck for rectangle}
Let $\mathcal{M}$ and $\mathcal{N}$ be two rectangle decomposable persistence modules given as above. Let $S$ be the set of all partial multibijections between the barcodes $B(\mathcal{M})$ and $B(\mathcal{N})$. Let $\sigma \in S$ and let $\mathbf{I}'(\sigma)=B(\mathcal{M})- \coim \sigma$, $\mathbf{J}'(\sigma)=B(\mathcal{N})- \im \sigma$. Then, the bottleneck distance is equal to {\footnotesize
\[
 \displaystyle \min_{\sigma \in S} \max \left\{  \max_{R_i \in \coim \sigma} \Big\{  d_I\Big(\mathcal{M}_i, \sigma(\mathcal{M}_i)\Big) \Big\}, \max_{R_{i'} \in \mathbf{I}'(\sigma)} \Big\{ \min_{s=1,2} \frac{b_s^{i'}-a_s^{i'}}{2} \Big\}, \max_{Q_{j'} \in \mathbf{J}'(\sigma)} \Big\{ \min_{s=1,2} \frac{d_s^{j'}-c_s^{j'}}{2} \Big\} \right\}
\]}
where the interleaving distance between $\mathcal{M}_i$ and $\sigma(\mathcal{M}_i)$ is equal to {\footnotesize
\[
\min \Bigg\{ \max \Big\{ \min_{s=1,2} \frac{b_s^i-a_s^i}{2}, \min_{s=1,2} \frac{d_s^{\sigma(i)}-c_s^{\sigma(i)}}{2} \Big\}, \max \Big\{ \| c^{\sigma(i)}-a^i \|_{\infty}, \| d^{\sigma(i)}-b^i \|_{\infty} \Big\}  \Bigg\}.
\]}
\end{thmb}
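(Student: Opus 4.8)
The plan is to combine Proposition~\ref{bot.dis.2} with Theorem A (Theorem~\ref{interleavingdistanceisequaltostarvalue}) and Proposition~\ref{2epsilontrivialiffepsiloninterleaved}. By Proposition~\ref{bot.dis.2}, the bottleneck distance $d_B(\mathcal{M},\mathcal{N})$ equals
\[
\min_{\sigma \in S} \max \Big\{  \max_{R_i \in \coim \sigma} \big\{ d_I(\mathcal{M}_i, \sigma(\mathcal{M}_i)) \big\}, \max_{R_{i'} \in \mathbf{I}'(\sigma)} \big\{ d_I(\mathcal{M}_{i'}, 0) \big\}, \max_{Q_{j'} \in \mathbf{J}'(\sigma)} \big\{ d_I(0, \mathcal{N}_{j'}) \big\} \Big\},
\]
so it suffices to rewrite each of the three kinds of terms in closed geometric form. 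For the first kind, each $\mathcal{M}_i$ and $\sigma(\mathcal{M}_i)$ is a rectangle persistence module, so Theorem A applies verbatim and gives exactly the displayed expression for $d_I(\mathcal{M}_i,\sigma(\mathcal{M}_i))$. For the second and third kinds I need to compute $d_I$ of a rectangle persistence module with the zero module.

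The key auxiliary step is therefore: if $\mathcal{M}_{i'}$ is the rectangle persistence module with underlying rectangle $(a_1^{i'},b_1^{i'})\times(a_2^{i'},b_2^{i'})$, then $d_I(\mathcal{M}_{i'},0)=\min_{s=1,2}\frac{b_s^{i'}-a_s^{i'}}{2}$. To see this, combine Proposition~\ref{2epsilontrivialiffepsiloninterleaved} (which says $\mathcal{M}_{i'}$ is $\epsilon$-interleaved with $0$ iff it is $2\epsilon$-trivial) with Proposition~\ref{epsilontrivialrectangle} for the upper bound: for $2\epsilon \geq \min_s\{b_s^{i'}-a_s^{i'}\}$ the module is $2\epsilon$-trivial, hence $\epsilon$-interleaved with $0$, so $d_I(\mathcal{M}_{i'},0)\leq\min_s\frac{b_s^{i'}-a_s^{i'}}{2}$. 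For the reverse inequality, I need that if $2\epsilon < \min_s\{b_s^{i'}-a_s^{i'}\}$ then $\mathcal{M}_{i'}$ is $2\epsilon$-significant; this is immediate since for any $u$ with $a^{i'}\prec u$ and $u+2\vec\epsilon\prec b^{i'}$ (such $u$ exists precisely because $2\epsilon<b_s^{i'}-a_s^{i'}$ for each $s$) the transition map $\varphi_{\mathcal{M}_{i'}}(u,u+2\vec\epsilon)=\id_{\mathbb{k}}\neq 0$. Alternatively, this is just the special case of Theorem A with $\mathcal{N}$ the rectangle shrunk to nothing, but stating it via triviality is cleaner. The same argument handles $d_I(0,\mathcal{N}_{j'})=\min_s\frac{d_s^{j'}-c_s^{j'}}{2}$.

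Substituting these three computed values back into the expression from Proposition~\ref{bot.dis.2} yields exactly the formula in the statement, with the inner $d_I(\mathcal{M}_i,\sigma(\mathcal{M}_i))$ expanded via Theorem A as displayed. One should note that Proposition~\ref{bot.dis.2} is stated for finitely presented interval decomposable modules, so I would either restrict to that setting or observe (via Proposition~\ref{closureequality} and Corollary~\ref{interiorequality}) that the finiteness of $B(\mathcal{M})$ and $B(\mathcal{N})$ — which we have by hypothesis, the sums being finite — is what is actually used, so that $S$ is finite and the infimum in the definition of $d_B$ is attained.

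I expect the main obstacle to be purely bookkeeping rather than conceptual: carefully matching the index sets $\coim\sigma$, $\mathbf{I}'(\sigma)$, $\mathbf{J}'(\sigma)$ between the ``$I\in B(\mathcal{M})$'' notation of Proposition~\ref{bot.dis.2} and the ``$R_i$'' / ``$Q_j$'' notation here, and making sure the reindexing $i\mapsto \sigma(i)$ is consistent so that $\|c^{\sigma(i)}-a^i\|_\infty$ and $\|d^{\sigma(i)}-b^i\|_\infty$ appear with the correct superscripts. A secondary subtlety is confirming that Theorem A is genuinely applicable to each pair $(\mathcal{M}_i,\sigma(\mathcal{M}_i))$ without any hypothesis on the rectangles being bounded — but the running conventions in Section~3 already allow $a_s^i,c_s^j\in\mathbb{R}\cup\{-\infty\}$ and $b_s^i,d_s^j\in\mathbb{R}\cup\{+\infty\}$, so nothing new is needed.
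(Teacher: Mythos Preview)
Your proposal is correct and follows essentially the same approach as the paper: apply Proposition~\ref{bot.dis.2}, then use Propositions~\ref{epsilontrivialrectangle} and~\ref{2epsilontrivialiffepsiloninterleaved} to evaluate $d_I(\mathcal{M}_{i'},0)$ and $d_I(0,\mathcal{N}_{j'})$, and Theorem~A for $d_I(\mathcal{M}_i,\sigma(\mathcal{M}_i))$. Your extra care in justifying the lower bound $d_I(\mathcal{M}_{i'},0)\geq \min_s\frac{b_s^{i'}-a_s^{i'}}{2}$ and in noting the finiteness hypothesis behind Proposition~\ref{bot.dis.2} is welcome but not a departure from the paper's argument.
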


\begin{proof}
By Proposition~\ref{bot.dis.2}, the bottleneck distance between two rectangle decomposable persistence modules $\mathcal{M}$ and $\mathcal{N}$ with decompositions $$
\displaystyle \mathcal{M}=\bigoplus_{i=1}^{m}\mathcal{M}_i ~\textrm{and}~ \displaystyle \mathcal{N}=\bigoplus_{j=1}^{k}\mathcal{N}_j$$  can be computed as
{\footnotesize
\[
\displaystyle \min_{\sigma \in S} \max \left\{  \max_{R_i \in \coim \sigma} \Big\{  d_I\Big(\mathcal{M}_i, \sigma(\mathcal{M}_i)\Big) \Big\}, \max_{R_{i'} \in \mathbf{I}'(\sigma)} \Big\{ d_I\Big(\mathcal{M}_{i'}, 0\Big) \Big\}, \max_{Q_{j'} \in \mathbf{J}'(\sigma)} \Big\{  d_I\Big(0, \mathcal{N}_{j'}\Big)   \Big\} \right\}.
\]}

By Propositions~\ref{epsilontrivialrectangle} and~\ref{2epsilontrivialiffepsiloninterleaved}, it follows that
\[
d_I\Big(\mathcal{M}_{i'}, 0\Big)=\min_{s=1,2} \frac{b_s^{i'}-a_s^{i'}}{2} \ \ \textrm{and} \ \ d_I\Big(0,  \mathcal{N}_{j'}\Big)=\min_{s=1,2} \frac{d_s^{j'}-c_s^{j'}}{2}.
\]

Finally, by Theorem A, we know that the interleaving distance between $\mathcal{M}_i$ and $\sigma(\mathcal{M}_i)$ is equal to
{\footnotesize \[ 
\displaystyle \min \Bigg\{ \max \Big\{ \min_{s=1,2} \frac{b_s^i-a_s^i}{2}, \min_{s=1,2} \frac{d_s^{\sigma(i)}-c_s^{\sigma(i)}}{2} \Big\}, \max \Big\{ \| c^{\sigma(i)}-a^i \|_{\infty}, \| d^{\sigma(i)}-b^i \|_{\infty} \Big\}  \Bigg\}.
\]}
\end{proof}

The next stage of our ongoing research is to get an analogous formula for the case of interval modules.  This is not a straightforward task:  An underlying rectangle can be represented by just two corner points, but this is no longer true and is more complicated for an underlying interval of an interval persistence module.  Moreover, two rectangles or their shiftings intersect along a single component, which is again a rectangle, but for intervals or their shiftings, the number of intersection components can be more than one.

\bibliographystyle{amsplain}
	\providecommand{\bysame}{\leavevmode\hbox
		to3em{\hrulefill}\thinspace}
	\providecommand{\MR}{\relax\ifhmode\unskip\space\fi MR }
	\providecommand{\MRhref}[2]{
		\href{http://www.ams.org/mathscinet-getitem?mr=#1}{#2}
	} \providecommand{\href}[2]{#2}

\end{document}